\numberwithin{equation}{section}
\def\R{\mathbb R}
\def\C{\mathbb C}
\def\N{\mathbb N}
\def\Z{\mathbb Z}
\def\BB{\mathcal B}
\def\CC{\mathcal C}
\def\FF{\mathcal F}
\def\JJ{\mathcal J}
\def\KK{\mathcal K}
\def\HH{\mathcal H}
\def\XX{\mathcal X}
\def\re{\operatorname{Re}}
\def\im{\operatorname{Im}}
\def\diam{\operatorname{diam}}
\def\supp{\operatorname{supp}}
\def\area{\operatorname{area}}
\newtheorem{theorem}{Theorem}[section]
\newtheorem{lemma}{Lemma}[section]
\theoremstyle{remark}
\newtheorem*{remark}{Remark}
\begin{document}
\title[Hausdorff measure of hairs]{Hausdorff measure of hairs without endpoints in the exponential family}
\author{Walter Bergweiler}
\address{Mathematisches Seminar, Christian-Albrechts-Universit\"at zu Kiel,
24098 Kiel, Germany}
\email{bergweiler@math.uni-kiel.de}
\author{Jun Wang}
\address{School of Mathematical Sciences, Fudan University,
Shanghai 200433, P.~R.\ China}
\email{majwang@fudan.edu.cn}
\subjclass[2010]{37F10, 30D05}
\keywords{Iteration, Fatou set, Julia set, hair, endpoint, Hausdorff dimension, Hausdorff measure}
\date{}
\begin{abstract}
Devaney and Krych showed that for $0<\lambda<1/e$ the Julia set of $\lambda e^z$
consists of pairwise disjoint curves, called hairs, which connect finite points,
called the endpoints of the hairs, with~$\infty$.
McMullen showed that the Julia set has Hausdorff dimension $2$ and Karpi\'nska showed
that the set of hairs without endpoints has Hausdorff dimension~$1$.
We study for which gauge functions the Hausdorff measure of the
set of hairs without endpoints is finite.
\end{abstract}
\maketitle

\section{Introduction and main results}
The \emph{Fatou set} $\FF(f)$ of a transcendental entire function $f$ is defined as the set of all
$z\in\C$ where the iterates $f^n$ of $f$ form a normal family. Its complement
$\JJ(f)=\C\backslash \FF(f)$ is called the \emph{Julia set}. These sets are the main objects studied in
complex dynamics; see~\cite{Bergweiler1993} and~\cite{Schleicher2010} for an introduction
to transcendental dynamics.

In some sense, the exponential functions $E_\lambda(z)=\lambda e^z$, with $\lambda \in\C\backslash\{0\}$,
are the ``simplest'' transcendental entire functions, and thus the dynamics of these functions have
been thoroughly studied; see~\cite{Devaney2010} for a survey, as well as, e.g., \cite{Rempe2006,Schleicher2003a}.

We mention some of the results that have been obtained.
Here we restrict to the case that $0<\lambda<1/e$, even though some of the results
discussed below hold more generally.
In the following we suppress the index $\lambda$ and write $E$ instead of~$E_\lambda$.
For $\lambda$ satisfying the above condition the function $E$ has two
real fixed points $\alpha$ and $\beta$ satisfying $\alpha< 1<\beta$, with $\alpha$
attracting and $\beta$ repelling.

Devaney and Krych~\cite[p.~50]{Devaney1984} proved $\FF(E)$ is equal to the attracting
basin of $\alpha$ and that $\JJ(E)$ consists of uncountably many pairwise disjoint curves
connecting a point in $\C$, called the endpoint of the curve, with $\infty$.
These curves are called \emph{hairs} (or dynamic rays).
McMullen~\cite[Theorem~1.2]{McMullen1987} proved that $\JJ(E)$ has Hausdorff dimension~$2$.
Let $\CC$ be the set of endpoints of the hairs that form $\JJ(E)$.
Karpi\'nska~\cite[Theorem~1.1]{Karpinska1999} proved the surprising
result that  $\JJ(E)\backslash \CC$ has Hausdorff dimension~$1$.
Of course, together with McMullen's result this implies that $\CC$ has  Hausdorff dimension~$2$,
a result she had proved already earlier~\cite[Theorem~1]{Karpinska1999a}.

McMullen remarked that $\JJ(E)$ not only has Hausdorff dimension~$2$, but that
in fact the Hausdorff measure $\HH^h(\JJ(E))$ of $\JJ(E)$ with respect
to the gauge function $h(t)=t^2/\log^m(1/t)$ is infinite, for any iterate
$\log^m$ of the logarithm; see section~\ref{hausdorff} below for the definition
of Hausdorff measure and Hausdorff dimension.
A very precise description of the gauge functions $h$ for which
$\HH^h(\JJ(E))=\infty$ was given by Peter~\cite{Peter2010}.

The purpose of this paper is to study for which gauge functions the
Hausdorff measure of $\JJ(E)\backslash\CC$ is finite or infinite.
Our first result is the following.
\begin{theorem}\label{thm2}
Let $s>1$.
Then $\mathcal{H}^h(\JJ(E)\backslash \mathcal{C})=0$
for $h(t)=t/(\log(1/t))^s$.
\end{theorem}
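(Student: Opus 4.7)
The plan is to cover $\JJ(E)\setminus\CC$ by thin pullback tubes at deep iterates of $E$, exploiting the exponential expansion of $E$ along escaping orbits together with the extra factor $(\log(1/t))^{-s}$ in the gauge. Since $0<\lambda<1/e$, every $z\in\JJ(E)\setminus\CC$ satisfies $E^n(z)\to\infty$, so one has
\begin{equation*}
\JJ(E)\setminus\CC \;\subset\; \bigcup_{R,N\in\N} A_{R,N}, \qquad A_{R,N}:=\{z\in\JJ(E):\,\re E^n(z)\ge R\text{ for all }n\ge N\},
\end{equation*}
and by countable subadditivity of $\HH^h$ it suffices to prove $\HH^h(A_{R,N})=0$ for each sufficiently large $R$ and each $N$.

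To build the covers, assign to $z\in A_{R,N}$ its itinerary $(k_n)_{n\ge N}$, where $k_n$ is the unique integer with $E^n(z)\in S_{k_n}:=\{w:(2k_n-1)\pi<\im w\le(2k_n+1)\pi\}$. For each admissible initial segment $\underline{k}=(k_N,\ldots,k_n)$ let $T_{\underline{k}}$ be the set of $z\in A_{R,N}$ realising it; the tubes $\{T_{\underline{k}}\}$ partition $A_{R,N}$. Writing $E^j(z)=x_j+iy_j$, the relation $E^{j+1}(z)=\lambda e^{x_j}(\cos y_j+i\sin y_j)$ gives $|y_{j+1}|\le\lambda e^{x_j}$, so admissibility imposes $|k_{j+1}|\le\lambda e^{x_j}/(2\pi)+1$. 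Since $E$ has no critical points and the inverse branches in question avoid a neighbourhood of the attracting fixed point, a Koebe-type distortion estimate together with the chain rule yields
\begin{equation*}
\diam T_{\underline{k}} \;\le\; C\prod_{j=N}^{n-1}|E'(E^j z)|^{-1} \;=\; C\lambda^{-(n-N)}e^{-S_{\underline{k}}}, \qquad S_{\underline{k}}:=x_N+\cdots+x_{n-1}.
\end{equation*}
Because $S_{\underline{k}}\ge(n-N)R$, for $R>\log(1/\lambda)$ the diameters shrink to $0$ as $n\to\infty$, and $\log(1/\diam T_{\underline{k}})\gtrsim S_{\underline{k}}$, so $h(\diam T_{\underline{k}})\lesssim \lambda^{-(n-N)}e^{-S_{\underline{k}}}/S_{\underline{k}}^{\,s}$.

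Combining the diameter estimate with the branching bound $|k_{j+1}|\lesssim e^{x_j}$, a counting argument of McMullen--Karpi\'nska type controls the number of admissible itineraries with $S_{\underline{k}}$ in a unit window $[S,S+1]$ by (at most) $(\lambda/\pi)^{n-N}e^{S}$ (up to constants), so that grouping the tubes by the value of $S_{\underline{k}}$ gives
\begin{equation*}
\sum_{\underline{k}} h(\diam T_{\underline{k}}) \;\lesssim\; (1/\pi)^{n-N}\!\!\sum_{S\ge(n-N)R}\frac{1}{S^{\,s}} \;\lesssim\; \frac{(1/\pi)^{n-N}}{((n-N)R)^{s-1}}.
\end{equation*}
Both factors go to $0$ as $n\to\infty$ because $s>1$; this yields $\HH^h(A_{R,N})=0$, and summing over $R$, $N$ completes the proof.

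\emph{Main obstacle.} The technical heart of the argument is the counting step. The naive pairing of $\approx e^{S}$ admissible itineraries against $\approx e^{-S}$ tube diameters only just fails to bound $1$-dimensional Hausdorff measure---which is exactly why Karpi\'nska's dimension result is sharp---and the extra factor $S^{-s}$ provided by the gauge is what creates a convergent tail. The condition $s>1$ is precisely what is needed for $\sum S^{-s}$ to converge. Making the counting and the Koebe distortion control rigorous and uniform in $n$, so that no spurious exponential-in-$n$ prefactor survives, is the delicate point; one likely has to refine the terminal cover below unit scale and track carefully those orbits that enter strips $S_k$ with $|k|$ comparable to $e^{x_j}$.
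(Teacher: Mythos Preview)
Your decomposition $\JJ(E)\setminus\CC\subset\bigcup_{R,N}A_{R,N}$ is too coarse to work, and in fact the conclusion $\HH^h(A_{R,N})=0$ is false. The sets $A_{R,N}$ contain not only the hairs without endpoints but also all escaping endpoints: if $z=h_{\underline{s}}(u_{\underline{s}})\in\CC$ with $u_{\underline{s}}>\beta$, then $\re E^k(z)\geq E^k(u_{\underline{s}})\to\infty$, so $z\in A_{R,N}$ for large~$N$. Since $\CC$ has Hausdorff dimension~$2$ (by McMullen and Karpi\'nska), and your gauge satisfies $h(t)/t^{d}\to\infty$ for every $d>1$, one has $\HH^h(A_{R,N})=\infty$. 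Thus no covering argument for $A_{R,N}$ can yield $\HH^h$-measure zero, and the computation after the counting step must be in error.

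Concretely, your tubes $T_{\underline{k}}$, as defined, are unbounded (the itinerary only constrains imaginary parts), so the Koebe/chain-rule diameter bound does not apply. If you refine to pullbacks of $\pi\times\pi$ squares as in the paper, you must also sum over the real-part positions $x_N,\dots,x_{n}$, and the asserted count ``$(\lambda/\pi)^{n-N}e^{S}$ itineraries with $S_{\underline{k}}\in[S,S+1]$'' omits this combinatorial factor; with it, the sum diverges (as it must, since the escaping set has dimension~$2$). The essential geometric fact you are not using is that points of $\JJ(E)\setminus\CC$ eventually satisfy $|\im E^k(z)|<L^\varepsilon(\re E^k(z))$ for some $\varepsilon>0$ (Theorem~\ref{thm5}); this confines the orbit to a domain $\Omega_\psi$ of width $\psi(x)=L^\varepsilon(x)=o(x)$. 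The paper exploits this by covering $E^n(K_{n-1}^*(z))\cap\Omega_\psi$ with only about $R_n^*/\psi(R_n^*)$ squares of side $2\psi(R_n^*)$ rather than the $\sim(R_n^*)^2$ unit squares needed to cover the full half-annulus. It is precisely this gain of a factor $\psi(R_n^*)/R_n^*$ in both the number and size of covering sets that converts a dimension-$2$ estimate into the desired measure-zero statement for the gauge $t/(\log(1/t))^s$.
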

Our estimates in the opposite direction -- as well as the description
of $\JJ(E)\backslash \mathcal{C}$ in Theorems~\ref{thm5} and~\ref{thm4}
below that is used in the proofs -- involve fractional iterates.

In order to state these results, note that $E'(\beta)=E(\beta)=\beta$
so that the multiplier of the fixed point $\beta$ is also $\beta$.
It is a standard  result in complex dynamics that Schr\"oder's functional equation
\begin{equation}\label{schroeder}
S(\beta z)=E(S(z))
\end{equation}
has a solution $S$ which is holomorphic in a neighborhood $U$ of $0$ and satisfies $S(0)=\beta$ and $S'(0)=1$.
Since $\beta>1$, the equation~\eqref{schroeder} allows to extend $S$ to an entire function
by putting
\begin{equation}\label{schroeder2}
S(z)=E^k(S(z/\beta^k)),
\end{equation}
with $k$ so large that $z/\beta^k\in U$.

It is easy to see that $S$ is real on the real axis.
Since $S'(0)=1$ we have $S'(x)>0$ for all $x$ in $U\cap \R$, if $U$ is sufficiently small.
As $S'(x)=(E^k)'(S(x/\beta^k))S'(x/\beta^k)/\beta^k$ we see that in fact $S'(x)>0$ for all $x\in\R$.
Thus $S$ is increasing on $\R$.
It follows easily from~\eqref{schroeder2} that $S(x)\to\alpha$ as $x\to -\infty$ while
$S(x)\to\infty$ as $x\to\infty$. Thus $S\colon \R \to (\alpha,\infty)$ is bijective.

By $S^{-1}$ we denote the inverse of the restriction of $S$ to these intervals.
For $r\in \R$ the \emph{fractional iterates} $E^r\colon [\alpha,\infty)\to [\alpha,\infty)$
are then defined by $E^r(\alpha)=\alpha$ and
\begin{equation} \label{fraciter}
E^r(x)=S(\beta^{r}S^{-1}(x)).
\end{equation}
It follows easily from~\eqref{schroeder} that this coincides with the usual definition
of the iterates $E^r$ if $r\in\N$.
We also note that $E^r\circ E^s=E^s\circ E^r=E^{r+s}$ for $r,s\in\R$.
Moreover, $E^{-1}$ is the inverse function of $E\colon [\alpha,\infty)\to [\alpha,\infty)$.

We put $L=E^{-1}$ so that $L(x)=\log x-\log \lambda$.
The fractional iterates of $L$ are given by $L^r=E^{-r}$.
\begin{theorem}\label{thm1}
Let $s>1$.
Then $\mathcal{H}^h(\JJ(E)\backslash \mathcal{C})=\infty$ for $h(t)=t/L^s(1/t)$.
\end{theorem}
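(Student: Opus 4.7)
The plan is to apply the mass distribution principle: for each $N\in\N$, we construct a positive Borel measure $\mu_N$ supported in $\JJ(E)\setminus\mathcal{C}$, with total mass at least~$N$, satisfying $\mu_N(B(z,r))\leq C\,h(r)$ for every disk $B(z,r)$ with a constant $C$ independent of~$N$. This yields $\mathcal{H}^h(\JJ(E)\setminus\mathcal{C})\geq N/C$, and letting $N\to\infty$ proves the theorem. Since the companion Theorem~\ref{thm2} establishes the opposite conclusion for the smaller gauge $t/(\log(1/t))^s$, the gauge $h(t)=t/L^s(1/t)$ is essentially critical, and the measures $\mu_N$ must be constructed so as to saturate, rather than waste, the ball bound.

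The construction relies on the description of $\JJ(E)\setminus\mathcal{C}$ provided by Theorems~\ref{thm5} and~\ref{thm4}, which identify the non-endpoint set in terms of fractional iterates and present it as the limit of admissible iterated preimages under the inverse branches $L_j(w)=\log w-\log\lambda+2\pi ij$, $j\in\Z$, of~$E$. Starting from a reference segment of the real hair near a large base point $x_0$, we pull back by compositions $L_{j_n}\circ\cdots\circ L_{j_1}$, restricting the indices $j_k$ and weighting each branch so that the pushforward of length measure on the reference segment has total mass~$N$ and is supported in the set characterized by Theorems~\ref{thm5} and~\ref{thm4}. The key algebraic input is the semigroup identity $L^s\circ E^n=E^{n-s}$ coming from~\eqref{fraciter}, which translates depth into scale: if $d_n$ is the typical cell diameter after $n$ pullbacks, then $L^s(1/d_n)$ behaves like $E^{n-s}(x_0)$, so that $h(d_n)\asymp d_n/E^{n-s}(x_0)$, and this is exactly the scaling needed to balance the distortion produced by composing $n$ branches of~$L$.

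The principal obstacle is making the ball bound $\mu_N(B(z,r))\leq C\,h(r)$ hold uniformly in~$N$ and at \emph{every} scale~$r$, not only at the discrete cell-scales $d_n$, while simultaneously ensuring via Theorems~\ref{thm5} and~\ref{thm4} that the constructed limit set is contained in $\JJ(E)\setminus\mathcal{C}$ and does not slip into~$\mathcal{C}$. This forces Koebe-type distortion estimates along arbitrarily long compositions of inverse branches of~$L$, together with sharp asymptotics for the fractional iterates $L^s$ at scales between consecutive $d_n$. These two technical ingredients, rather than the mass distribution principle itself, constitute the heart of the argument.
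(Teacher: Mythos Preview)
Your overall strategy---Frostman's mass distribution principle applied to a measure built from iterated inverse branches $L_{j}$, with the inclusion into $\JJ(E)\setminus\CC$ supplied by Theorem~\ref{thm4}---is exactly the paper's route. The paper, however, isolates the construction as the general Theorem~\ref{thm3}(i): for $\psi$ increasing with $\psi(x)=o(x)$ and $\psi(2x)=O(\psi(x))$, one has $\HH^h(\XX(x_0,\psi))=\infty$ whenever $h(t)=t/p(1/t)$ satisfies $p\bigl(t(\log t)^{1+\delta}/\psi(t)\bigr)\leq\psi(\log t)$. Theorem~\ref{thm1} then reduces to checking this condition for $\psi=L^\varepsilon$ and $p=L^s$ with $0<\varepsilon<s-1$, a two-line computation using the semigroup identity you note together with $L^{s-1}(t)=o(L^\varepsilon(t))$ from~\eqref{Lsr}. (Theorem~\ref{thm5} is needed only for the upper bound in Theorem~\ref{thm2}, not here; and the gauge $t/L^s(1/t)$ is far from critical, since $L^s$ for $s>1$ is an iterated logarithm, much smaller than any power of $\log$.)

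One step in your plan would fail as written. A measure obtained as the pushforward of length measure on a reference segment under finitely many compositions $L_{j_0}\circ\cdots\circ L_{j_{n-1}}$ is supported on a finite union of analytic arcs; on any such arc $\mu(B(z,r))\asymp r$ as $r\to0$, while $h(r)=r/L^s(1/r)=o(r)$, so no bound $\mu(B(z,r))\leq C\,h(r)$ can hold at small scales. Passing to $n=\infty$ does not rescue this, since each pulled-back segment shrinks to a point. What is actually required---and what the paper does, following Karpi\'nska--Urba\'nski---is a two-dimensional recursive redistribution: begin with a square, at level $n$ declare its children to be those preimage squares whose $E^n$-image lies in the rectangle $\{2r_n<\re z<\tfrac{2}{3}R_n,\ |\im z|<\psi(r_n)\}$ inside the half-annulus $E^n(K_{n-1})$, and split each parent's mass among its children in proportion to area. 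The limit is a probability measure on a Cantor-type set $X_\infty\subset\XX$ with $\mu(D(z,r))/h(r)\to0$ (this is precisely where \eqref{condp} is used), and Lemma~\ref{th21} gives $\HH^h=\infty$ directly; the mass-$N$ device is unnecessary. Finally, your outline leaves the index restriction implicit: the decisive choice is $|j_k|\lesssim L^\varepsilon(|E^k(z)|)$ with $\varepsilon<s-1$, and without naming it one cannot see why the estimate balances.
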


We make some remarks about the proofs.
An important ingredient in Karpi\'nska's proof that $\JJ(E)\backslash \CC$ has
Hausdorff dimension~$1$ was her result that if
\[
\Omega=\left\{z\in\C\colon \re z\geq M
\text{ and } |\im z|\leq (\re z)^\varepsilon\right\},
\]
with $M,\varepsilon>0$, and if $z\in \JJ(E)\backslash \CC$, then $E^k(z)\in\Omega$ for all large~$k$.
She then proved that, for sufficiently large~$M$,
the set of all $z$ with $E^k(z)\in\Omega$ for all $k\in\N$ has Hausdorff dimension
at most $1+\varepsilon$.

Karpi\'nska and Urba\'nski~\cite{Karpinska2006} also considered the dimension of certain
subsets of  $\JJ(E)$. While their result is not stated this way, it essentially says that with
\[
\Omega=\left\{z\in\C\colon \re z\geq M
\text{ and }
|\im z|\leq \frac{\re z}{(\log \re z)^\varepsilon}
\right\}
\]
the set of all $z$ with $E^k(z)\in\Omega$ for all $k\in\N$
and $\re E^k(z)\to\infty$ as $k\to\infty$
has Hausdorff dimension $(2+\varepsilon)/(1+\varepsilon)$.
We mention that the above domain $\Omega$ also appears in Stallard's construction~\cite{Stallard2000}
of entire functions whose Julia set has preassigned Hausdorff dimension in $(1,2)$.

We shall adapt the methods of Karpi\'nska and Urba\'nski to prove the following result.
\begin{theorem}\label{thm3}
Let $x_0>\beta$ and let $\psi\colon [x_0,\infty)\to (0,\infty)$ be an increasing function.
Suppose that $\psi(x)\to\infty$, $\psi(2x)=O(\psi(x))$ and $\psi(x)=o(x)$ as $x\to\infty$. 
\[
\Omega_\psi=\left\{z\in\C\colon \re z>x_0 \text{ and } |\im z|< \psi(\re z)\right\}
\]
and
\[
\XX=\XX(x_0,\psi)=\left\{z\in\C\colon
\re E^k(z)> E^k(x_0)
\text{ and }
E^k(z)\in \Omega_\psi \text{ for all }k\in\N \right\}.
\]
Let $h\colon(0,t_0)\to(0,\infty)$ be a gauge function of the form $h(t)=t/p(1/t)$ with
some increasing function $p\colon(1/t_0,\infty)\to(0,\infty)$ such that $t\to t\,p(1/t)$ is increasing
and let $\delta>0$.
Then:
\begin{itemize}
\item[$(i)$]
If
\begin{equation}\label{condp}
p\!\left(\frac{t(\log t)^{1+\delta}}{\psi(t)}\right)\leq \psi(\log t)
\end{equation}
for large $t$, then $\mathcal{H}^h(\XX)=\infty$.
\item[$(ii)$]
If
\begin{equation}\label{condp2}
p\!\left(\frac{t\log t}{\psi(t)}\right)\geq (\log t)^{1+\delta}
\end{equation}
for large $t$, then $\mathcal{H}^h(\XX)=0$.
\end{itemize}
\end{theorem}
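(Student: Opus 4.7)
The plan is to adapt the symbolic-dynamics scheme of Karpi\'nska and Urba\'nski~\cite{Karpinska2006}, pulling back a reference box under inverse branches of $E$ and performing a cover or a mass-distribution computation on the resulting Moran-type family. For $X$ large, set $B_X=\{z:X\le\re z\le 2X,\,|\im z|<\psi(X)\}\subset\Omega_\psi$, and for $\mathbf n=(n_1,\dots,n_k)\in\Z^k$ let $\phi_{\mathbf n}=L_{n_1}\circ\cdots\circ L_{n_k}$, where $L_n(w)=\log w-\log\lambda+2\pi in$ is the $n$-th inverse branch of $E$. Since $L_n$ sends a set of real part $\sim Y$ to one of real part $\sim L(Y)$ and imaginary part $\sim 2\pi n$, the admissibility $L_{n_j}\circ\cdots\circ L_{n_k}(B_X)\subset\Omega_\psi$ for $1\le j\le k$ amounts to $|n_j|\lesssim\psi(L^{k-j+1}(X))/(2\pi)$, and hence the number of admissible words of length $k$ is
\[
N_k(X)\asymp\prod_{j=1}^{k}\psi(L^j(X)).
\]
Using $(E^k)'(z)=\prod_{j=1}^{k}E^j(z)$ (from $E'=E$) together with Koebe's distortion theorem on a slight thickening of~$B_X$, each cell has
\[
\mathrm{diam}\,\phi_{\mathbf n}(B_X)\asymp\frac{1}{P_k(X)},\qquad P_k(X):=\prod_{m=1}^{k-1}L^m(X).
\]
The cells partition $\{z\in\XX:E^k(z)\in B_X\}$, and letting $X$ run over dyadic values in $[E^k(x_0),\infty)$ yields a cover of~$\XX$ at level~$k$.

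For part~(ii), I would cover $\XX$ using the cells $\phi_{\mathbf n}(B_X)$, truncating the $X$-range at some $X_*(k)\to\infty$ and absorbing the $\re E^k(z)>X_*(k)$ tail separately. The cover sum is
\[
\sum_X N_k(X)\,h\!\bigl(1/P_k(X)\bigr)=\sum_X\frac{\psi(L^k(X))}{p(P_k(X))}\prod_{j=1}^{k-1}\frac{\psi(L^j(X))}{L^j(X)}.
\]
Each factor $\psi(L^j(X))/L^j(X)\le c<1$ (because $\psi(x)=o(x)$ uniformly for $x\ge x_0$), while $\log P_k(X)\asymp L^2(X)\asymp\log\log X$ gives, via~\eqref{condp2} applied at $y\sim P_k(X)$, the bound $p(P_k(X))\gtrsim(\log\log X)^{1+\delta}$. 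Thus each summand is at most $c^{\,k-1}(\log\log X)^{-(1+\delta)}$, which tends to~$0$ uniformly in $X$ as $k\to\infty$, and summation over the admissible~$X$ remains controlled; we conclude $\HH^h(\XX)=0$.

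For part~(i), I would apply the mass distribution principle. Assign each admissible level-$k$ cell targeting $B_X$ equal mass $1/N_k(X)$, and take a weak limit to obtain a probability measure $\mu$ on the part of $\XX$ whose $k$-th iterate lies in~$B_X$. For $z\in\XX$ and small $r>0$, choose $\ell$ with $P_\ell(X)\asymp 1/r$; the ball $B(z,r)$ meets only boundedly many level-$\ell$ cells by a geometric argument using the bounded distortion of $\phi_{\mathbf n}$, so $\mu(B(z,r))\lesssim 1/N_\ell(X)$. The required inequality $1/N_\ell(X)\le C\,h(r)$ rearranges to $p(P_\ell(X))\prod_{j=1}^{\ell}\psi(L^j(X))\ge P_\ell(X)/C$, which is precisely what~\eqref{condp} delivers (applied at $t\sim L^\ell(X)$), the factor $(\log t)^{1+\delta}$ providing the slack needed to absorb the iterated-log products. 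Normalizing appropriately on $B_X$ and letting $X\to\infty$ makes $\mu(\XX)/C$ unbounded, so $\HH^h(\XX)=\infty$.

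The main obstacle I anticipate is the bookkeeping matching the telescoping products $\prod_{j=1}^k\psi(L^j(X))$ and $P_k(X)$ against the single-variable hypotheses~\eqref{condp} and~\eqref{condp2}: the monotonicity and doubling $\psi(2x)=O(\psi(x))$ and the monotonicity of $t\mapsto t\,p(1/t)$ are exactly what is needed to collapse these products into expressions of the form $p(P_\ell(X))$ and $\psi(L^\ell(X))$, and the $(\log t)^{1+\delta}$ slack is calibrated to absorb the loss from iterating~$L$. A secondary technical step is verifying Koebe-type distortion uniformly for all $\phi_{\mathbf n}$ and ensuring that the ball $B(z,r)$ meets only $O(1)$ cells at the chosen level~$\ell$, both of which require a small thickening of $\Omega_\psi$ in the pullback and a careful choice of the initial box~$B_X$.
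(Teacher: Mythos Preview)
Your symbolic-dynamics scheme captures the right combinatorics, but there are structural problems in both parts that prevent the argument from closing.

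For part~(i), the family $\{\phi_{\mathbf n}(B_X):|\mathbf n|=k\}$ for a \emph{fixed} target $B_X$ is not nested as $k$ varies: if $z\in\phi_{\mathbf n}(B_X)$ with $|\mathbf n|=k$, then $\re E^k(z)\asymp X$, whence $\re E^{k+1}(z)\asymp e^X\gg X$, so $z$ lies in \emph{no} level-$(k+1)$ cell targeting the same $B_X$. The supports of your $\mu_k$ are therefore pairwise disjoint, there is no weak limit supported on~$\XX$, and the mass-distribution step collapses. The paper fixes this by letting the target scale grow with the level: it starts from a single square in $\Omega_\psi$ and declares the children of a level-$(n-1)$ cell $K$ to be those level-$n$ cells $K'\subset K$ with $E^{n}(K')$ landing in a controlled sub-rectangle of the half-annulus $E^{n}(K)$ (whose inner radius satisfies $r_{n}\ge\exp(\eta r_{n-1})$). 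This genuine nesting is what yields the measure with $\mu(K_n)\asymp\prod_{i=1}^n(r_i\psi(r_i))^{-1}$. The paper then needs a two-case analysis of $\mu(D(z,t))$, according to whether the pushed-forward disk $E^n(D(z,t))$ is narrower or wider than the strip $\{|\im|\le\psi(r_n)\}$; your claim that $B(z,r)$ meets only $O(1)$ level-$\ell$ cells is unjustified and in fact fails, since your cells inherit aspect ratio $\sim X/\psi(X)$ from $B_X$.

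For part~(ii), the global cover forces you to sum over all dyadic $X\ge E^k(x_0)$, and the bound $p(P_k(X))\gtrsim(\log\log X)^{1+\delta}$ is far too weak to control the factor $\psi(L^k(X))$, which tends to~$\infty$ with~$X$; the ``tail absorbed separately'' is doing real work you have not supplied. The paper avoids both issues. First, it subdivides $E^n(K_{n-1}^*(z))\cap\Omega_\psi$ into squares of side $2\psi(R_n^*)$ rather than using one long box of width $\asymp R_n^*$, so the pulled-back pieces have diameter $d_n(z)\asymp\psi(R_n^*)/|(E^n)'(z)|$ and hence $1/d_n(z)$ is of order $R_n^*\log R_n^*/\psi(R_n^*)$---precisely the argument at which~\eqref{condp2} gives $p\ge(\log R_n^*)^{1+\delta}$. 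Second, instead of a global cover it invokes the local density criterion (Lemma~\ref{th22}): at each $z\in\XX$ one produces $\rho(z),d(z),N(z)$ with $N(z)h(d(z))\le\varepsilon\rho(z)^2$, and a Vitali-type extraction converts this into $\HH^h(\XX)=0$ without ever summing over the target scale.
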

Theorems~\ref{thm2} and~\ref{thm1} will follow
from Theorem~\ref{thm3} and the following two results.
\begin{theorem}\label{thm5}
Let $x_0>\beta$ and $z\in \JJ(E)\backslash \CC$. Then there exist
$\varepsilon>0$ and $k\in\N$ such that $E^k(z)\in \XX(x_0,L^\varepsilon)$.
\end{theorem}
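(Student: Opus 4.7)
The plan is to invoke the classical Devaney--Krych parameterization of $\JJ(E)$, for $0<\lambda<1/e$, as a bouquet of hairs encoded by external addresses. Each $z\in \JJ(E)$ is uniquely of the form $z=g_s(t)$ for some address $s=(s_0,s_1,\ldots)\in\Z^{\N}$ and some potential $t\geq t_s$, with $g_s(t_s)$ being the endpoint; equivalently, $z\in \JJ(E)\setminus\CC$ iff $t>t_s$. The ``model'' map on potentials is topologically conjugate to $E$, so after a reparameterization of the hairs we may assume $E(g_s(t))=g_{\sigma s}(E(t))$, where $\sigma$ is the shift. In this normalization, $\im g_s(t)\to 2\pi s_0$ exponentially and $\re g_s(t)\sim t$ as $t\to\infty$, so for large $k$,
\[
\re E^k(z)\sim E^k(t),\qquad |\im E^k(z)|=2\pi|s_k|+o(1).
\]
Moreover, $t\geq t_s$ is characterized by the admissibility condition $2\pi|s_k|\leq E^{k-1}(t)+O(1)$ for all large $k$.

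Given $z=g_s(t)$ with $t>t_s$, choose an intermediate potential $t'\in(t_s,t)$. Admissibility at $t'$ then gives $2\pi|s_k|\leq E^{k-1}(t')+O(1)$, so $|\im E^k(z)|\leq E^{k-1}(t')+O(1)$ for large $k$. On the other hand, the group law $L^\varepsilon\circ E^k=E^{k-\varepsilon}$ on $[\alpha,\infty)$, which follows from \eqref{fraciter} and $E^r\circ E^s=E^{r+s}$, together with the slow variation of $L^\varepsilon$, yields
\[
L^\varepsilon(\re E^k(z))\sim L^\varepsilon(E^k(t))=E^{k-\varepsilon}(t)=E^{k-1}\!\left(E^{1-\varepsilon}(t)\right).
\]
Since $\varepsilon\mapsto E^{1-\varepsilon}(t)$ is continuous and $E^1(t)=E(t)>t>t'$ (after replacing $z$ by an iterate we may assume the potential exceeds $\beta$), we pick $\varepsilon>0$ small enough that $E^{1-\varepsilon}(t)>t'+\eta$ for some $\eta>0$. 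The derivative of $E^{k-1}$ at $t'$ is of iterated-exponential size, so $E^{k-1}(t'+\eta)-E^{k-1}(t')\to\infty$ extremely fast, dominating any $O(1)$ error. Hence $|\im E^k(z)|<L^\varepsilon(\re E^k(z))$ for all large $k$.

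The remaining condition $\re E^{k_0+j}(z)>E^j(x_0)$ for $j\in\N$ reduces, via $\re E^{k_0+j}(z)\sim E^{k_0+j}(t)$, to the single inequality $E^{k_0}(t)>x_0$, which holds for $k_0$ large. Taking $k_0$ large enough for both the imaginary-part estimate and this real-part inequality gives $E^{k_0}(z)\in \XX(x_0,L^\varepsilon)$.

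The main obstacle is the quantitative admissibility bound at the \emph{strict} intermediate potential $t'<t$: converting the qualitative non-endpoint hypothesis $t>t_s$ into the numerical inequality $2\pi|s_k|\leq E^{k-1}(t')+O(1)$ is what produces the margin $\eta$, which is then amplified by $E^{k-1}$ into a gap that absorbs both the $O(1)$ errors and the $E^{\varepsilon}$ cost of passing from $E^{k-1}$ to $E^{k-\varepsilon}$.
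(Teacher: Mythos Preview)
Your approach is essentially the same as the paper's: parametrize the hair with the model map $E$, use the non-endpoint hypothesis $t>t_s$ to pick an intermediate potential $t'\in(t_s,t)$, bound $|\im E^k(z)|$ by $E^k(t')$ up to constants via the admissibility condition, and then choose $\varepsilon$ so that $E^k(t')<L^\varepsilon(E^k(t))$ using the group law $L^\varepsilon\circ E^k=E^{k-\varepsilon}$.

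The paper's execution is cleaner in one respect that you should adopt. Rather than the asymptotic $\re E^k(z)\sim E^k(t)$, the paper proves (Lemma~\ref{hsn}) the \emph{exact} lower bound $\re E^k(z)\ge E^k(t)$. With this in hand one simply sets $v=L^\varepsilon(t)$ for the intermediate potential (equivalently, defines $\varepsilon$ by $t'=L^\varepsilon(t)$) and obtains in one line
\[
|\im E^k(z)|<E^k(v)=E^k(L^\varepsilon(t))=L^\varepsilon(E^k(t))\le L^\varepsilon(\re E^k(z)),
\]
using only that $L^\varepsilon$ is increasing. This bypasses your detour through $E^{1-\varepsilon}(t)>t'+\eta$ and the ``slow variation'' of $L^\varepsilon$. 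More importantly, the exact lower bound is what makes your final reduction legitimate: from $\re E^{k_0+j}(z)\ge E^{k_0+j}(t)=E^j(E^{k_0}(t))$ and monotonicity of $E^j$, the condition $\re E^{k_0+j}(z)>E^j(x_0)$ for all $j$ \emph{does} reduce to the single inequality $E^{k_0}(t)>x_0$. With only $\re E^{k_0+j}(z)\sim E^{k_0+j}(t)$ this reduction is not immediate (one must argue that $E^j(E^{k_0}(t))/E^j(x_0)$ stays bounded away from $1$ uniformly in $j$), so your write-up has a small gap there that the exact inequality closes.
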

\begin{theorem}\label{thm4}
Let $x_0>\beta$ and $\varepsilon>0$.
Then $\XX(x_0,L^\varepsilon)\subset \JJ(E)\backslash \CC$.
\end{theorem}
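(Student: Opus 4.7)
The plan is to split the proof into two parts: showing $z\in\JJ(E)$, and showing $z\notin\CC$. For the first part, since $x_0>\beta$ and $E(x)>x$ for every $x>\beta$ (as $E$ is convex on $\R$ with $E(\beta)=\beta$ and $E'(\beta)=\beta>1$), the sequence $\{E^k(x_0)\}_{k\geq 0}$ is strictly increasing and stays above $\beta$. Any finite limit would be a real fixed point exceeding $\beta$, and no such fixed point exists, so $E^k(x_0)\to\infty$. The hypothesis $\re E^k(z)>E^k(x_0)$ thus forces $\re E^k(z)\to\infty$, so the orbit of $z$ escapes. Since $\FF(E)$ is the attracting basin of $\alpha$ and every orbit in $\FF(E)$ converges to $\alpha$, we conclude $z\notin\FF(E)$, i.e.\ $z\in\JJ(E)$.

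For the second part, the hair containing $z$ admits a natural parametrization $g\colon[t_{\min},\infty)\to\C$ from the endpoint $g(t_{\min})\in\CC$ to $\infty$, under which $\re E^k(g(t))$ is monotonically increasing in $t$. To show $z\notin\CC$ it suffices to construct a sequence of points $w_n$ on the same hair as $z$ with $w_n\to z$ and $\re E^n(w_n)<\re E^n(z)$; each such $w_n$ has parameter $t_{w_n}<t_z$, forcing $t_z>t_{\min}$. Concretely, set $z_k=E^k(z)$ and let $L_k$ denote the branch of $E^{-1}$ with $L_k(z_{k+1})=z_k$; this is well-defined on a large disk about $z_{k+1}$ since $\re z_{k+1}\to\infty$. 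For each large $n$ and a small $\tau_n>0$, put $w_n=L_0\circ L_1\circ\cdots\circ L_{n-1}(z_n-\tau_n)$. The contraction $|L_k'|\sim 1/\re z_{k+1}$ gives $w_n\to z$, while univalence of the composition gives $w_n\neq z$; once the orbits of $w_n$ and $z$ are seen to stay in the same vertical strips, they share an external address and hence lie on the same hair.

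The main obstacle will be verifying that each $w_n$ actually belongs to $\XX(x_0,L^\varepsilon)$, i.e.\ that the full forward orbit $(E^m(w_n))_{m\geq 0}$ remains in $\Omega_{L^\varepsilon}$. For $m\leq n$ this is immediate from the construction, but for $m>n$ the initial perturbation $\tau_n$ is amplified by $E$ roughly like $\tau_n\cdot z_{n+1}z_{n+2}\cdots z_m$, while the half-width $L^\varepsilon(\re E^m(w_n))$ of $\Omega_{L^\varepsilon}$ grows extremely slowly. A single fixed $\tau_n$ therefore cannot keep the orbit in $\Omega_{L^\varepsilon}$ forever, and the naive perturbation must be replaced by a diagonal or Cantor-set-style construction in which one pulls back an infinite family of consistent constraints simultaneously. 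Carrying out this quantitative tracking, exploiting the commutation identity $L\circ L^\varepsilon=L^{\varepsilon+1}$ to balance the expansion of $E$ against the slow growth of $L^\varepsilon$, is the technical heart of the proof.
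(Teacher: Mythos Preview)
Your first paragraph, showing $\XX(x_0,L^\varepsilon)\subset\JJ(E)$, is fine and matches the paper's one-line observation that $\FF(E)$ is the attracting basin of~$\alpha$.

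The second part, however, has a genuine gap, and it is precisely the one you flag yourself. Your points $w_n=L_0\circ\cdots\circ L_{n-1}(z_n-\tau_n)$ are not known to lie on any hair: for $m>n$ the orbit $E^m(w_n)$ is completely uncontrolled, so $w_n$ may well lie in the Fatou set, or in $\JJ(E)$ but with an itinerary different from that of~$z$. Your proposal to fix this by a ``diagonal or Cantor-set-style construction'' is not carried out, and it is not clear how to make it work: what you would really need is a point of the form $h_{\underline{s}}(u-\tau)$ with $\tau>0$, but producing such a point presupposes $u>u_{\underline{s}}$, which is exactly the conclusion you want. The auxiliary monotonicity claim that $t\mapsto\re E^k(g(t))$ is increasing is also asserted without proof; it is not an immediate consequence of the known estimates on the hair parametrization.

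The paper sidesteps all of this by arguing the contrapositive: it takes an endpoint $z=h_{\underline{s}}(u_{\underline{s}})$ and shows directly that $z\notin\XX(x_0,L^\varepsilon)$. The key input is the two-sided bound of Lemma~\ref{hsn},
\[
u\leq \re h_{\underline{s},\,n}(u)\leq u+\pi\sum_{k=1}^n\frac{2|s_k|+1}{\beta^{k-1}E^k(u)},
\]
which gives $\re E^k(z)\leq E^k(u)+\beta/(\beta-1)$ for $u$ slightly larger than $u_{\underline{s}}$ and $k$ large. If $u_{\underline{s}}=\beta$ this immediately forces $\re E^k(z)\leq E^k(x_0)$ eventually. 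If $u_{\underline{s}}>\beta$, then by definition of $u_{\underline{s}}$ one has $|s_k|\gtrsim E^k(u_1)$ for infinitely many $k$ and any $u_1<u_{\underline{s}}$, so $|\im E^k(z)|$ is, infinitely often, at least of order $E^k(u_1)$; choosing $u_1<u_{\underline{s}}<u_2<E^\varepsilon(u_1)$ and using Lemma~\ref{lemma-L(cx)} then yields $L^\varepsilon(\re E^k(z))\leq |\im E^k(z)|$, so $E^k(z)\notin\Omega_{L^\varepsilon}$ for arbitrarily large~$k$. This argument never needs to manufacture new points on the hair and avoids the forward-orbit control problem entirely; I would recommend rewriting your proof along these lines.
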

This paper is organized as follows. In section~\ref{hausdorff}
we recall the definition and properties of Hausdorff measure and Hausdorff dimension
and in section~\ref{fractional} we discuss fractional iterates of $E$ and $L$ in more detail than
in this introduction.
In section~\ref{hairs} we first recall some results about hairs and their endpoints
and then prove Theorems~\ref{thm5} and~\ref{thm4}.
Section~\ref{proofthm3} consists of the proof of Theorem~\ref{thm3}
while Theorems~\ref{thm2} and~\ref{thm1} are proved in section~\ref{proofthm21}.

\section{Hausdorff measure and Hausdorff dimension}
\label{hausdorff}
We recall the definition of Hausdorff measure and Hausdorff dimension; see
Falconer's book~\cite{Falconer1990} for more details.
For $A\subset \R^m$ we denote by $\diam A$ the (Euclidean) diameter of~$A$.
We denote the open ball of radius $r$ around a point $z\in \R^m$
by $D(z,r)$.
(We will only be concerned with the case $m=2$ so that $D(z,r)$ is a disk.)

Let $t_0>0$. An increasing, continuous function $h\colon (0,t_0)\to (0,\infty)$ which satisfies
$\lim_{t\to 0} h(t)=0$ is called a \emph{gauge function} (or \emph{dimension function}).
For  a subset $A$ of $\R^m$ and $\delta>0$ a sequence $(A_k)$ of
subsets of $\R^m$ is called a $\delta$-\emph{cover} of $A$ if $\diam A_k <\delta$ for all $k\in\N$ and
$$A\subset\bigcup_{k=1}^{\infty}A_k.$$
For a gauge
function $h$ we put
$$H_h^{\delta}(A)=\inf\left\{\sum_{k=1}^{\infty}h(\diam A_k)\colon (A_k)\text{ is }\delta\text{-cover of }A\right\}.$$
Note that $H_h^\delta(A)$ is a non-increasing function of $\delta$.  Thus the limit
$$\HH^h(A)=\lim_{\delta\to 0}H_h^{\delta}(A)$$
exists. It is called the \emph{Hausdorff measure} of $A$ with respect to the gauge function~$h$.

It may happen that $\sum_{k=1}^\infty h(\diam A_k)$ diverges for all $\delta$-covers $(A_k)$,
in which case we have $H_h^\delta(A)=\infty$ and thus $\HH^h(A)=\infty$.

In the special case that $h$ has the form $h(t)=t^s$ for some $s>0$, we call $\HH^h(A)$ the
$s$-\emph{dimensional Hausdorff measure}. There exists $d\geq 0$ such that
$\HH^{t^s}(A)=\infty$ for $0<s<d$ and $\HH^{t^s}(A)=0$ for $s>d$. This value $d$ is
called the \emph{Hausdorff dimension} of~$A$.

An important tool to estimate the Hausdorff measure and Hausdorff dimension from
below is the following result \cite[Theorem 7.6.1]{Przytycki2010}, which
is a part of Frostman's lemma and also known as the mass distribution principle.
\begin{lemma}\label{th21}
Let $h$ be a gauge function and $A\subset\R^m$. If there exists a Borel probability measure
$\mu$ supported on $A$ such that
$$\lim_{r\to 0}\frac{\mu(D(z,r))}{h(r)}=0\quad \text{for all }z\in A,$$
then $\HH^h(A)=\infty$.
\end{lemma}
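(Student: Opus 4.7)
The plan is to adapt the standard mass distribution principle: if one had a uniform bound $\mu(D(z,r))\leq C\,h(r)$ for $z\in A$ and small $r$, the usual covering argument would give $\HH^h(A)\geq \mu(A)/C$. Our hypothesis is only pointwise, so I would first extract useful uniform estimates by a level-set decomposition of $A$ and then run the standard argument on each level set, letting the parameters drift to recover the conclusion $\HH^h(A)=\infty$.

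For each $\varepsilon>0$ and each integer $k\geq 1$, set
$$A_\varepsilon^{(k)}=\left\{z\in A\colon \mu(D(z,r))\leq \varepsilon\, h(r) \text{ for all }0<r\leq 1/k\right\}.$$
The assumption $\lim_{r\to 0}\mu(D(z,r))/h(r)=0$ guarantees that every $z\in A$ lies in $A_\varepsilon^{(k)}$ for all sufficiently large $k$, so $A_\varepsilon^{(k)}$ increases to $A$ as $k\to\infty$, and since $\mu$ is a probability measure supported on $A$, continuity of measure gives $\mu(A_\varepsilon^{(k)})\to 1$. Now fix $\varepsilon$ and $k$, let $0<\delta<1/k$, and let $(U_j)$ be any $\delta$-cover of $A$. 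For each $j$ with $U_j\cap A_\varepsilon^{(k)}\neq\emptyset$ fix a point $z_j$ in this intersection; then $U_j\subset D(z_j,\diam U_j+\eta)$ for every $\eta>0$, and for $\eta$ small enough that $\diam U_j+\eta<1/k$ the definition of $A_\varepsilon^{(k)}$ yields
$$\mu(U_j\cap A_\varepsilon^{(k)})\leq \mu(D(z_j,\diam U_j+\eta))\leq \varepsilon\, h(\diam U_j+\eta).$$
Letting $\eta\to 0$ and using continuity of $h$ gives $\mu(U_j\cap A_\varepsilon^{(k)})\leq \varepsilon\, h(\diam U_j)$. Summing over $j$ and using that $(U_j)$ covers $A\supset A_\varepsilon^{(k)}$ produces $\mu(A_\varepsilon^{(k)})\leq \varepsilon \sum_j h(\diam U_j)$. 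Taking the infimum over $\delta$-covers and then letting $\delta\to 0$ gives $\HH^h(A)\geq \mu(A_\varepsilon^{(k)})/\varepsilon$. Finally, letting $k\to\infty$ and then $\varepsilon\to 0$ yields $\HH^h(A)=\infty$.

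The main obstacle is precisely the absence of uniformity in the hypothesis: the classical mass distribution principle requires a constant $C$ independent of $z$, while here the rate at which $\mu(D(z,r))/h(r)\to 0$ may depend on $z$. The level-set decomposition above is essentially an Egorov-type device that replaces this pointwise convergence by a uniform bound on an exhausting family $A_\varepsilon^{(k)}$ of full measure in the limit; everything else is a routine covering calculation. The only additional technicality is the measurability of the sets $A_\varepsilon^{(k)}$, which follows from the lower semicontinuity of $z\mapsto \mu(D(z,r))$ and a restriction of the defining condition to rational $r\in(0,1/k]$, using the left-continuity of $r\mapsto\mu(D(z,r))$.
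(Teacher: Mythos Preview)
The paper does not give its own proof of this lemma; it simply cites the result from \cite[Theorem~7.6.1]{Przytycki2010} as a standard form of the mass distribution principle (part of Frostman's lemma). Your argument is a correct self-contained proof and is in fact the standard one: the level-set decomposition $A_\varepsilon^{(k)}$ upgrades the pointwise density hypothesis to a uniform bound on sets that exhaust $A$ in measure, and the covering computation then yields $\HH^h(A)\geq \mu(A_\varepsilon^{(k)})/\varepsilon\to 1/\varepsilon$ as $k\to\infty$, hence $\HH^h(A)=\infty$ as $\varepsilon\to 0$. One minor remark: the measurability discussion at the end can be bypassed entirely by working with the outer measure $\mu^*$ in place of $\mu$, since the inequality $\mu^*(A_\varepsilon^{(k)})\leq \varepsilon\sum_j h(\diam U_j)$ follows from the covering without any measurability assumption; but your treatment via lower semicontinuity in $z$ and left-continuity in $r$ is also correct.
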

In order to estimate the Hausdorff measure from above, we will use the following result.
\begin{lemma}\label{th22}
Let $A\subset \R^m$ and let $h$ be a gauge function.
Suppose that for all $x\in A$ and $\delta,\varepsilon>0$,
there exists $\rho(x)\in(0,1)$, $d(x)\in(0,\delta)$ and $N(x)\in\N$
satisfying $N(x)h(d(x))\leq \varepsilon\cdot\rho(x)^m$ such that $D(x,\rho(x))\cap A$ can be covered by
$N(x)$ sets of diameter at most $d(x)$. Then $\HH^h(A)=0$.
\end{lemma}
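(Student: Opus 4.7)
The plan is to produce, for any prescribed $\delta,\varepsilon>0$, a $\delta$-cover of $A$ whose total $h$-weight is bounded by a constant (independent of $\varepsilon$ and $\delta$) times $\varepsilon$. Sending $\varepsilon\to 0$ will then yield $H_h^\delta(A)=0$ for every $\delta>0$, and hence $\HH^h(A)=0$. Since $\HH^h$ is countably subadditive and $A=\bigcup_n(A\cap D(0,n))$, I would first reduce to the case when $A$ is bounded; the hypothesis restricted to $x\in A\cap D(0,n)$ still supplies the same data.

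With $A$ bounded and $\delta,\varepsilon>0$ fixed, the hypothesis supplies for each $x\in A$ a triple $\rho(x)\in(0,1)$, $d(x)\in(0,\delta)$, $N(x)\in\N$ together with a cover of $D(x,\rho(x))\cap A$ by $N(x)$ sets of diameter at most $d(x)<\delta$. The balls $\{D(x,\rho(x)):x\in A\}$ form a cover of $A$ by balls centered in $A$ with radii at most $1$. The central step is to thin this cover so that the \emph{original} (unenlarged) balls still cover $A$ but have only bounded overlap. For this I would invoke the Besicovitch covering theorem, which yields countable subfamilies $\mathcal{B}_1,\dots,\mathcal{B}_M$, with $M=M(m)$ depending only on the ambient dimension, each consisting of pairwise disjoint balls, such that $A\subset\bigcup_{j=1}^{M}\bigcup_{B\in\mathcal{B}_j}B$. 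Gluing together the $N(x)$ sets associated to each ball $D(x,\rho(x))\in\bigcup_j\mathcal{B}_j$ then produces a $\delta$-cover of $A$ with $h$-weight at most
\[
\sum_{j=1}^{M}\sum_{D(x,\rho(x))\in\mathcal{B}_j}N(x)\,h(d(x))\leq \varepsilon\sum_{j=1}^{M}\sum_{D(x,\rho(x))\in\mathcal{B}_j}\rho(x)^m.
\]
Since each $\mathcal{B}_j$ is disjoint and the balls have radii at most $1$, they all lie in the unit neighborhood of $A$, whose (finite) Lebesgue measure bounds the inner sum by a constant $C$ independent of $\varepsilon$ and $\delta$. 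Thus $H_h^\delta(A)\leq MC\varepsilon$, and $\varepsilon\to 0$ completes the argument.

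I expect the main obstacle to be the choice of covering theorem. A naive $5r$-style Vitali lemma provides disjoint balls $D(x_i,\rho(x_i))$ whose $5$-enlargements cover $A$, but the hypothesis only gives a small-diameter cover of $D(x_i,\rho(x_i))\cap A$, not of $D(x_i,5\rho(x_i))\cap A$, so the $5r$-lemma alone is insufficient. Besicovitch's theorem is exactly the tool that trades strict disjointness for a bounded overlap constant $M=M(m)$ while preserving the property that the original balls cover $A$, which is the requirement the hypothesis was tailor-made for.
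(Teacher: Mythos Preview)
Your proposal is correct, and the overall architecture---reduce to bounded $A$, extract a controlled subfamily of the balls $D(x,\rho(x))$, sum the $h$-weights, and bound $\sum \rho(x)^m$ via disjointness and boundedness---matches the paper. The difference lies in the covering lemma used.

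You invoke Besicovitch to obtain finitely many disjoint subfamilies whose union of original balls covers $A$, and you explicitly dismiss a Vitali-type $5r$-lemma on the grounds that the hypothesis only controls $D(x,\rho(x))\cap A$ and not the enlarged ball. The paper, however, \emph{does} use a $4r$-Vitali lemma (Lemma~\ref{lemmafalc}), and sidesteps exactly the obstacle you identified by applying it to the shrunken balls $D(x,\tfrac14\rho(x))$: one obtains a countable set $L\subset K$ with the quarter-radius balls pairwise disjoint and with $K\subset\bigcup_{x\in L}D(x,\rho(x))$. Now the covering balls are precisely the ones for which the hypothesis supplies the $N(x)$ small sets, and the disjoint quarter-radius balls still give $\sum_{x\in L}\rho(x)^m\le 4^m(R+\tfrac14\delta)^m$.

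Both arguments are clean. Yours gains a dimension-dependent overlap constant $M(m)$ but calls on a deeper covering theorem; the paper's approach is more elementary, using only the basic Vitali lemma, at the price of the scaling trick. Your remark that the $5r$-lemma ``alone is insufficient'' is worth revisiting: the rescaling observation shows it is in fact enough.
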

A very similar result for Hausdorff dimension can be found in~\cite[Lemma 5.2]{Bergweiler2010}.
Lemma~\ref{th22} can be proved by the same argument, but for completeness we include
the proof. As in~\cite{Bergweiler2010} we will use the following result~\cite[Lemma~4.8]{Falconer1990}.
\begin{lemma} \label{lemmafalc}
Let $K\subset \R^m$ be bounded, $R>0$ and
$\rho\colon K\to(0,R]$. Then there exists an at most countable subset $L$
of $K$ such that
\[
D(x,\rho(x))\cap D(y,\rho(y))=\emptyset
\quad\text{for } x,y\in L, \;   x\neq y,
\]
and
\[
\bigcup_{x\in K} D(x,\rho(x))\subset \bigcup_{x\in L} D(x,4\rho(x)) .
\]
\end{lemma}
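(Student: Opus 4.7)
The plan is a greedy Vitali-type selection, organised into dyadic-like size classes of $\rho$, so that whenever one disk must be absorbed into a neighbour's enlargement the size ratio stays tight enough to need only a factor of~$4$ (rather than the standard~$5$).

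First I fix a parameter $a\in(0,1)$ and partition $K$ into the size classes $K_n=\{x\in K:Ra^n<\rho(x)\leq Ra^{n-1}\}$, $n\in\N$. I then construct $L_n\subset K_n$ inductively: $L_1$ is a maximal (with respect to inclusion) subset of $K_1$ whose disks $D(x,\rho(x))$, $x\in L_1$, are pairwise disjoint; for $n>1$, $L_n$ is a maximal subset of $K_n$ whose disks are pairwise disjoint and also disjoint from every $D(y,\rho(y))$ with $y\in L_1\cup\cdots\cup L_{n-1}$. Existence of each $L_n$ follows from Zorn's lemma. Since $K$ is bounded and every disk chosen at stage $n$ has radius exceeding $Ra^n$, only finitely many such pairwise disjoint disks can fit inside any fixed bounded region, so each $L_n$ is finite and $L:=\bigcup_{n\in\N} L_n$ is at most countable.

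To verify the covering property, I take $x\in K$, say $x\in K_m$. Either $x\in L_m$, or the maximality of $L_m$ forces $D(x,\rho(x))\cap D(y,\rho(y))\neq\emptyset$ for some $y\in L_1\cup\cdots\cup L_m$. In the latter case $\rho(y)>Ra^m\geq a\rho(x)$, so for every $z\in D(x,\rho(x))$ the triangle inequality gives
$|z-y|\leq|z-x|+|x-y|<2\rho(x)+\rho(y)<(2/a+1)\rho(y).$
Requiring $2/a+1\leq 4$ amounts to $a\geq 2/3$, so taking $a=2/3$ yields $D(x,\rho(x))\subset D(y,4\rho(y))$, which is the claimed inclusion.

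The main obstacle (and the only real subtlety) is the calibration of the scale ratio~$a$: the naive dyadic partition $a=1/2$ produces only the familiar $5r$-covering bound, and trimming the enlargement factor down to~$4$ is precisely what forces the finer partition with $a\in[2/3,1)$. Beyond that, everything is routine: maximality is handed off to Zorn's lemma, and countability of each $L_n$ is immediate from boundedness of~$K$ together with the uniform lower bound $Ra^n$ on the radii of the disjoint disks chosen at stage~$n$.
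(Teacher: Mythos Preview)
The paper does not supply its own proof of this lemma; it simply quotes the result from Falconer's book~\cite[Lemma~4.8]{Falconer1990}. Your argument is correct and is essentially the standard Vitali-type selection that Falconer gives: stratify by scale, greedily pick a maximal disjoint subfamily starting from the largest scales, and absorb any omitted ball into a selected neighbour of comparable or larger radius. Your calibration $a=2/3$ is exactly what is needed to squeeze the enlargement factor down from the usual~$5$ to~$4$, and the finiteness of each $L_n$ (hence countability of $L$) follows as you say from the boundedness of $K$ and the uniform lower bound $Ra^n$ on the radii at stage~$n$. One cosmetic point: in the covering step you should say explicitly that if $x\in L_m$ then $D(x,\rho(x))\subset D(x,4\rho(x))$ trivially; you clearly intend this, but it is worth a half-line.
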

\begin{proof}[Proof of Lemma~\ref{th22}]
Let $K$ be a bounded subset of $A$ and choose $R>0$ such that $K\subset D(0,R)$ and let
$\delta,\varepsilon>0$. Noting that
\[
K\subset \bigcup_{x\in K} D\!\left(x,\tfrac14\rho(x)\right)
\]
we deduce from Lemma~\ref{lemmafalc} that their exists an at most countable subset $L$
of $K$
such that
\[
K\subset \bigcup_{x\in L} D(x,\rho(x))
\]
while
\[
D\!\left(x,\tfrac{1}{4}\rho(x)\right)\cap D\!\left(y,\tfrac{1}{4}\rho(y)\right)
=\emptyset\quad  \text{for}\ x,y\in L, \;x\neq y.
\]
For $x\in L$, let $A_1(x),A_2(x),\ldots,A_{N(x)}(x)$ be the
sets of diameter at most $d(x)$ which cover $D(x,\rho(x))\cap K$ so
that $N(x)h(d(x))\leq \varepsilon\cdot\rho(x)^m$. Then
\[
K\subset \bigcup_{x\in L}\bigcup _{j=1}^{N(x)}A_j(x).
\]
Now
\[
\sum_{x\in L}\sum^{N(x)}_{j=1}h\!\left(\diam A_j(x)\right)
\leq
\sum_{x\in L} N(x) h(d(x))\leq \varepsilon \sum_{x\in L} \rho(x)^m.
\]
Since $\rho(x)\leq \delta$ we have
$D\left(x,\tfrac{1}{4}\rho(x)\right)\subset
D\left(0,R+\tfrac{1}{4}\delta\right)$ for all $x\in L$. Since the balls
$D\left(x,\tfrac{1}{4}\rho(x)\right)$, $x\in L$, are pairwise disjoint,
this yields
\[
\sum_{x\in L}\left(\tfrac{1}{4}\rho(x)\right)^m\leq
\left(R+\tfrac{1}{4}\delta\right)^m.
\]
We obtain
\[
H_\delta^h(K)\leq
\sum_{x\in L}\sum^{N(x)}_{j=1}
h\!\left(\diam A_j(x)\right)
\leq\varepsilon\cdot(4R+\delta)^m.
\]
Since $\delta,\varepsilon>0$ were arbitrary, we conclude that $\HH^h(K)=0$.
As this holds for every bounded subset $K$ of $A$ we deduce that $\HH^h(A)=0$.
\end{proof}

\section{Fractional iterates}\label{fractional}
It is classical (see, e.g.\ \cite[p.~670]{Ritt1926}) that if an entire function $f$
has a repelling fixed point $\xi$ of multiplier $\mu$, then the normalized solution
$S$ of Schr\"oder's functional equation $f(S(z))=S(\mu z)$ is given by
\begin{equation} \label{ritt}
S(z) =\lim_{n\to\infty} f^n(\xi +z/\mu^n).
\end{equation}
For completeness we include a proof how~\eqref{ritt} can be deduced from the
more common formula for the conjugacies near attracting fixed points.
If $f$ has an attracting fixed point at~$0$ of multiplier $\mu$, then
(see, e.g.,~\cite[Section 3.4]{Steinmetz1993})
\[
\phi(z)=\lim_{n\to\infty}\frac{f^n(z)}{\mu^n}
\]
exists and satisfies $\phi(f(z))=\mu \phi(z)$ in a neighborhood of~$0$.
Moreover, $\phi(0)=0$ and $\phi'(0)=1$.
A fixed point at $\xi\in\C$ is reduced to the case $\xi=0$ by conjugating
with $z\mapsto z-\xi$ and the case of a repelling fixed point is reduced to
this case by considering a local inverse of $f$ instead of~$f$.

Hence if $f$ has a repelling fixed point $\xi$ of multiplier $\mu$, then
it follows with $T(z)=z+\xi$ that
$T^{-1}\circ f^{-1}\circ T$ has an attracting fixed point of multiplier $1/\mu$ at~$0$ and
\[
\phi(z)=\lim_{n\to\infty}\mu^n (T^{-1}\circ f^{-1}\circ T)^n(z)
\]
converges in some neighborhood of~$0$ and satisfies
$(\phi\circ T^{-1}\circ f^{-1}\circ T)(z)=\phi(z)/\mu$ there.
Here $f^{-1}$ is a branch of the inverse fixing $\xi$, defined in some neighborhood of~$\xi$.
It follows that
\[
\phi^{-1}(z)=\lim_{n\to\infty}(T^{-1}\circ f\circ T)^n(z/\mu^n)
\]
and $(T^{-1}\circ f\circ T\circ \phi^{-1})(z)=\phi^{-1}(\mu z)$.
Let $S=T\circ \phi^{-1}$, defined in some neighborhood of~$0$.  Then $f(S(z))=S(\mu z)$ and
\[
S(z)=\lim_{n\to\infty} (f^n\circ T)(z/\mu^n) =\lim_{n\to\infty} f^n(\xi +z/\mu^n).
\]
Once it is known that this holds in a neighborhood of $0$, it follows that this
in fact holds for all~$z\in\C$.

In our case we have $f=E$ and $\xi=\mu=\beta$. Thus
\begin{equation}\label{limitS}
S(z) =\lim_{n\to\infty} E^n(\beta +z/\beta^n).
\end{equation}
It follows from this equation that the coefficients in the Taylor series expansion of $S$
are non-negative. (This can also be shown by comparing coefficients in~\eqref{schroeder}.)
We deduce that if $C>1$, then $S(Cx)/S(x)\to\infty$ as $x\to\infty$.
Hence the fractional iterates defined by~\eqref{fraciter} satisfy
\[
\lim_{x\to\infty}\frac{E^r(x)}{x}
= \lim_{x\to\infty}\frac{S(\beta^rS^{-1}(x))}{S(S^{-1}(x))}
= \lim_{y\to\infty}\frac{S(\beta^r y)}{S(y)}
=\infty
\quad\text{for }r>0.
\]
In terms of  $L^r=E^{-r}$ this takes the form
\begin{equation}\label{Lr}
\lim_{x\to\infty}\frac{L^r(x)}{x}=0
\quad\text{for }r>0.
\end{equation}
More generally,
\begin{equation}\label{Lsr}
\lim_{x\to\infty}\frac{L^s(x)}{L^r(x)}=
\lim_{x\to\infty}\frac{L^{s-r}(L^r(x))}{L^r(x)}=
\lim_{y\to\infty}\frac{L^{s-r}(y)}{y}=0
\quad\text{for }s>r.
\end{equation}
We may replace $L^s(x)$ by some power of $L^s(x)$ here.
In fact, if $\gamma>0$ and $s>r$, then
\[
L(L^s(x)^\gamma)=\gamma\log L^s(x)-\log\lambda=\gamma L^{s+1}(x)+(\gamma-1)\log\lambda
\leq L^{r+1}(x)=L(L^r(x))
\]
and hence $L^s(x)^\gamma\leq L^r(x)$  for large $x$ by~\eqref{Lsr}.
As this holds for all $\gamma>0$, we conclude that in fact
\[
\lim_{x\to\infty}\frac{L^s(x)^\gamma}{L^r(x)}=0
\quad\text{for }s>r\text{ and }\gamma>0.
\]
One consequence of this is that the conclusion of Theorem~\ref{thm1} also holds
with $h(t)=t/L^s(1/t)$ replaced by $h(t)=t/L^s(1/t)^\gamma$ if $\gamma>0$.

\begin{lemma}\label{lemma-concave}
$L^r$ is concave for $r>0$.
\end{lemma}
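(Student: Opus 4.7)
The plan is to compute $(L^r)''$ from the Schr\"oder representation $L^r(x)=S(\beta^{-r}S^{-1}(x))$ in~\eqref{fraciter} and reduce the concavity assertion to a monotonicity property of an auxiliary function on $\R$. Setting $c=\beta^{-r}\in(0,1)$ and $u=S^{-1}(x)$, a straightforward chain-rule calculation yields
\[
(L^r)''(x)=\frac{c}{S'(u)^3}\bigl[cS''(cu)S'(u)-S'(cu)S''(u)\bigr].
\]
Since $S'>0$ on $\R$ (as recalled in section~\ref{fractional}), concavity of $L^r$ is equivalent to the inequality $c\phi(cu)\le\phi(u)$ for every $u\in\R$ and $c\in(0,1)$, where $\phi(u):=S''(u)/S'(u)=(\log S')'(u)$.

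Multiplying by $u$ and accounting for its sign transforms this into $\psi(cu)\le\psi(u)$ for $u>0$ and $\psi(cu)\ge\psi(u)$ for $u<0$, where $\psi(u):=u\phi(u)$. In both cases $cu$ lies strictly between $0$ and $u$, so both statements will follow from the single claim that $\psi$ is non-decreasing on all of $\R$.

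For $u\ge 0$ I plan to use the fact, already noted in section~\ref{fractional}, that the Taylor coefficients of $S$ at~$0$ are non-negative; hence so are the coefficients $b_n$ of $S'(u)=\sum_{n\ge 0} b_n u^n$. Substituting $u=e^t$, the function $\log S'(e^t)=\log\sum_{n\ge 0} b_n e^{nt}$ is the logarithm of a sum of non-negative exponentials and hence is convex in $t$ by the standard log-sum-exp inequality; its $t$-derivative equals $\psi(e^t)$, so $\psi$ is non-decreasing on $[0,\infty)$. For $u\le 0$ I would first show that $S''>0$ on all of $\R$ by propagating positivity outward via the identity $S''(\beta z)=S(\beta z)[S'(z)^2+S''(z)]/\beta^2$ (obtained from differentiating $S(\beta z)=\lambda e^{S(z)}$ twice): positivity of $S''$ on a neighbourhood of~$0$ implies positivity on $\beta$ times that neighbourhood, and iteration covers $\R$. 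Then I would telescope the functional equation $\psi(\beta w)-\psi(w)=wS'(w)$ to get the representation $\psi(z)=\sum_{k=1}^\infty(z/\beta^k)S'(z/\beta^k)$ and try to conclude monotonicity by analyzing its derivative termwise.

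The hard part will be the $u<0$ case. The log-sum-exp argument is not directly available there, since $S'(-e^t)=\sum_{n\ge 0}(-1)^n b_n e^{nt}$ has alternating-sign coefficients. Moreover, a termwise analysis of the series for $\psi'(z)$ fails because each term involves $(wS'(w))'=S'(w)+wS''(w)$, which is positive for $w>0$ but can change sign for $w<0$. Overcoming this will likely require the precise asymptotic $S(u)=\alpha+C(-u)^{-\gamma}+\cdots$ as $u\to-\infty$, with $\gamma=\log(1/\alpha)/\log\beta>0$, to compensate the possibly negative early terms against the tail of positive contributions near~$0$.
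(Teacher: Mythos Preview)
Your reduction is correct and coincides with the paper's starting point: since $(L^r)'(S(x))=\beta^{-r}S'(\beta^{-r}x)/S'(x)$, concavity of $L^r$ is the same as $x\mapsto S'(cx)/S'(x)$ being non-increasing for each $c=\beta^{-r}\in(0,1)$, which is precisely your inequality $c\phi(cu)\le\phi(u)$. The log-sum-exp argument you give for $u\ge 0$ is valid and rather neat; it uses the non-negativity of the Taylor coefficients of $S'$ in a way the paper does not.

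The gap is the case $u<0$, that is, the subinterval $(\alpha,\beta)$ of the domain of $L^r$, and it is not merely a loose end. Your own asymptotic already shows why the termwise idea cannot work: from $S(u)-\alpha\sim C(-u)^{-\gamma}$ one gets $\psi(u)=u\,S''(u)/S'(u)\to -(\gamma+1)$ as $u\to-\infty$, and since $0<\alpha<1$ gives $\gamma=\log(1/\alpha)/\log\beta>0$, this limit is strictly less than $-1$. Hence $S'(w)+wS''(w)=S'(w)\bigl(1+\psi(w)\bigr)$ is genuinely negative for all sufficiently negative $w$, so in the series $\psi'(z)=\sum_{k\ge 1}\beta^{-k}\bigl[S'(z/\beta^k)+(z/\beta^k)S''(z/\beta^k)\bigr]$ the leading terms are negative whenever $z$ is large and negative. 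You offer no mechanism for the positive tail to dominate, and the appeal to asymptotics is not a proof. As it stands, the proposal establishes concavity of $L^r$ only on $[\beta,\infty)$, whereas Lemma~\ref{lemma-L(cx)} needs it down to~$\alpha$.

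By contrast, the paper does not split into cases and does not use the Taylor expansion of $S$. It uses the limit formula $S(z)=\lim_{n\to\infty}E^n(\beta+z/\beta^n)$ to write
\[
\frac{S'(\beta^{-r}x)}{S'(x)}=\lim_{n\to\infty}\prod_{k=1}^n F_k(x),
\qquad F_k(x)=\frac{E^k(\beta+\beta^{-r-n}x)}{E^k(\beta+\beta^{-n}x)},
\]
computes $(\log F_k)'$ in closed form via $(E^k)'=\prod_{j=1}^k E^j$, and argues that each $F_k$ is decreasing by comparing the two resulting products of iterates factor by factor. This finite-product comparison replaces your power-series and convexity machinery and is what you are missing for $u<0$.
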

\begin{proof}
We show that $(L^r)'$ is non-increasing. Note that
\[
(L^r)'(u)= \beta^{-r}S'(\beta^{-r}(S^{-1})(u))(S^{-1})'(u)
=\beta^{-r}\frac{S'(\beta^{-r}x)}{S'(x)}
\]
with $u=S(x)$. Using~\eqref{limitS} we have
\[
\frac{S'(\beta^{-r}x)}{S'(x)}
=\lim_{n\to\infty}\frac{(E^n)'(\beta+\beta^{-r-n}x)}{(E^n)'(\beta+\beta^{-n}x)}
=\lim_{n\to\infty}\prod_{k=1}^nF_k(x),
\]
where
\[
F_k(x):=\frac{E^k(\beta+\beta^{-r-n}x)}{E^k(\beta+\beta^{-n}x)}.
\]
Taking the logarithmic derivative of $F_k$ gives
\[
(\log F_k)'(x)=\beta^{-r-n}\prod_{j=1}^{k-1}E^j(\beta+\beta^{-r-n}x)
-\beta^{-n}\prod_{j=1}^{k-1}E^j(\beta+\beta^{-n}x)< 0,
\]
since each factor in the second product is larger than the corresponding factor in the first product.
This implies that $F_k(x)$ is decreasing, so $S'(\beta^{-r}x)/S'(x)$ is non-increasing.
Therefore $(L^r)'$ is decreasing and thus $L^r$ is concave.
\end{proof}
\begin{lemma}\label{lemma-L(cx)}
Let $c>1$ and $r>0$. Then $L^r(cx)<cL^r(x)$ for all $x\in [\alpha,\infty)$.
\end{lemma}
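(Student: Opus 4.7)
My plan is to use the concavity of $L^r$ from Lemma~\ref{lemma-concave}. I would set $g(x) = cL^r(x) - L^r(cx)$ and aim to show $g(x) > 0$ on $[\alpha, \infty)$.

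First, I would derive the chord form of concavity: since $L^r(\alpha) = \alpha$, for $\alpha < x < cx$ the concavity of $L^r$ gives
\[
L^r(x) \ge \frac{cx - x}{cx - \alpha}\,\alpha + \frac{x - \alpha}{cx - \alpha}\,L^r(cx),
\]
which after rearranging becomes
\[
cL^r(x) - L^r(cx) \ge \frac{(c-1)\alpha}{x - \alpha}\bigl(x - L^r(x)\bigr).
\]
This shows $g(x) > 0$ whenever $L^r(x) < x$. For $x > \beta$ this holds because $\beta$ is the repelling fixed point of $L$ on $[\alpha, \infty)$, so $L(x) < x$ on $(\beta,\infty)$ and iteration preserves the inequality. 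Strict inequality comes from the strict concavity of $L^r$, which is visible because the factor $(\log F_k)'(x)$ in the proof of Lemma~\ref{lemma-concave} is strictly negative.

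Next, I would establish the monotonicity of $g$ on the remaining range. Differentiation gives
\[
g'(x) = c\bigl[(L^r)'(x) - (L^r)'(cx)\bigr] > 0,
\]
by strict concavity of $L^r$ and $cx > x$. Hence $g$ is strictly increasing on $[\alpha,\infty)$, and this reduces the claim on the interval $[\alpha,\beta)$, where $L^r(x) > x$ and the chord bound goes the wrong way, to the single boundary check $g(\alpha) \ge 0$, i.e., $L^r(c\alpha) \le c\alpha$.

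The main obstacle is precisely this boundary value. Since the multiplier at the attracting fixed point is $(L^r)'(\alpha) = \alpha^{-r} > 1$, the first-order expansion of $L^r$ at $\alpha$ gives $L^r(c\alpha) - c\alpha \approx \alpha(c-1)(\alpha^{-r}-1)$, so the simple tangent and chord bounds fail at this endpoint. Closing this gap will presumably require finer input, for instance exploiting the explicit representation $L^r(x) = S(\beta^{-r}S^{-1}(x))$ together with the positivity of the Taylor coefficients of $S$ (used in section~\ref{fractional}) to obtain subadditivity-type estimates for $L^r$ near $\alpha$ that go beyond what concavity alone provides.
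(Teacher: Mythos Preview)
Your chord-from-concavity derivation is exactly the paper's argument: the paper writes the same inequality
\[
L^r(cx)\le cL^r(x)+\frac{c-1}{x-\alpha}\,\alpha\bigl(L^r(x)-x\bigr)
\]
and then concludes with ``Since $L^r(x)\le x$, the conclusion follows,'' without restricting to $x\ge\beta$ or addressing the interval $(\alpha,\beta)$. So the paper's proof contains precisely the gap you identified.

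That gap, however, cannot be closed: the lemma is false as stated. Your own first-order computation at $\alpha$ already shows $g(\alpha)<0$ for $c$ near~$1$, and one can see it directly without Taylor expansion. For any $c>1$ with $c\alpha\in(\alpha,\beta)$ one has $L(c\alpha)>c\alpha=cL(\alpha)$, because $L(y)>y$ on $(\alpha,\beta)$ (the fixed point $\alpha$ is repelling for $L$, since $L'(\alpha)=1/\alpha>1$). Thus already $r=1$ gives a counterexample at $x=\alpha$, and the same reasoning via $L^r(y)>y$ on $(\alpha,\beta)$ works for every $r>0$. What is actually true---and what your argument (equivalently the paper's) proves---is $L^r(cx)<cL^r(x)$ for all $x\ge\beta$, where $L^r(x)\le x$ does hold. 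Both applications of the lemma in the paper, in the proofs of Theorems~\ref{thm4} and~\ref{thm1}, use it only for large $x$, so nothing downstream is affected; the domain in the statement should simply read $[\beta,\infty)$ rather than $[\alpha,\infty)$, and no ``finer input'' near $\alpha$ is needed or available.
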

\begin{proof}
Since $L^r$ is concave by Lemma~\ref{lemma-concave}, we have
\[
L^r(x)\geq \frac{x-\alpha}{cx-\alpha}L^r(cx)+\frac{cx-x}{cx-\alpha}L^r(\alpha).
\]
Since $L^r(\alpha)=\alpha$ this yields
\[
L^r(cx) \leq \frac{cx-\alpha}{x-\alpha} L^r(x) -\frac{cx-x}{x-\alpha}\alpha
= c L^r(x) +\frac{(c-1)}{x-\alpha} \left( \alpha L^r(x) -\alpha x\right).
\]
Since $L^r(x)\leq x$, the conclusion follows.
\end{proof}

\section{Hairs and endpoints: proof of Theorems~\ref{thm5} and~\ref{thm4}}\label{hairs}
We recall some results concerning the hairs that form the Julia set of~$E$.
We first note that the half-plane $\{z\in\C\colon \re z< \beta\}$ is contained in the
attracting basin of~$\alpha$.
Thus the Julia set $\JJ(E)$ is contained in the half-plane $H=\{z\in\C\colon\re z\geq\beta\}$. For $k\in\Z$ let
$$P(k)=\{z\in H\colon (2k-1)\pi\leq \im z<(2k+1)\pi\}.$$
The \emph{itinerary} of a point $z\in\JJ(E)$ is defined to be the sequence
$\underline{s}=(s_0,s_1,s_2,\cdots)$ such that $s_j=k$ if $E^{j}(z)\in P(k)$.

A sequence  $\underline{s}$ is called \emph{allowable}, if there exists $t\in \mathbb{R}$ such that
$E^j(t)\geq (2|s_j|+1)\pi$ for all $j\geq 0$. The key result proved by Devaney and Krych~\cite{Devaney1984}
(see also~\cite[Proposition~3.2]{Devaney1987}) is that if $\underline{s}$ is an allowable sequence,
then the set of all $z$ with itinerary $\underline{s}$ is a hair.
(For non-allowable $\underline{s}$ this set is empty.)

For a more detailed description we follow the ideas of Schleicher and Zimmer~\cite{Schleicher2003}
who defined the hairs (which they call dynamics rays) by using the comparison function
$F\colon [0,\infty)\to [0,\infty)$, $F(t)=e^t-1$.
Schleicher and Zimmer wrote the itineraries (which they call external addresses) in the
form $\underline{s}=(s_1,s_2,s_3,\cdots)$ instead of the notation
$\underline{s}=(s_0,s_1,s_2,\cdots)$ that was used in~\cite{Devaney1987,Devaney1984}
and that we will also use. We write their result using our terminology.

First we note that is easy to see that $\underline{s}$ is allowable
if and only if there exists $t>0$ such that
\begin{equation}\label{allowable}
\limsup_{k\to\infty}\frac{|s_k|}{F^{k}(t)}<\infty.
\end{equation}
Schleicher and Zimmer called such sequences exponentially bounded.
Let $t_{\underline{s}}$ be the infimum of all $t>0$ for which~\eqref{allowable} holds.
They then give a parametrization $g_{\underline{s}}\colon [t_{\underline{s}},\infty)\to P(s_0)$
of the hairs which satisfies
\begin{equation}\label{sz0}
E(g_{\underline{s}}(t))=g_{\sigma(\underline{s})}(F(t)) \quad \text{for}\,\,t>t_{\underline{s}},
\end{equation}
where $\sigma$ is the shift map; that is, $\sigma((s_0,s_1,s_2,\dots))=(s_1,s_2,s_3,\dots)$.
Here the function $g_{\underline{s}}$ is obtained as a limit of the functions
\[
g_{\underline{s},\,k}(t)=(L_{s_0}\circ L_{s_1}\circ \cdots\circ L_{s_k}\circ F^{k+1})(t).
\]
In~\cite[Proposition~3.4]{Schleicher2003} the convergence of this sequence is shown for $t\geq t^*$
with some $t^*\in\R$, but for the parameter range of $\lambda$ considered here it
actually holds for $t>t_{\underline{s}}$; see also~\cite[Lemma~3.1]{Bergweiler2010}.

While the results contained in the papers mentioned above are very close to the results we need,
they are not quite stated in a way suitable for us.
Thus we now describe the construction of the hairs in more detail.
First we mention that Schleicher and Zimmer noted that the choice of the
comparison function $F(t)$ is largely arbitrary.
One advantage of the function $F$ is that it does not depend on the parameter~$\lambda$.
For us it will be more convenient to use $E\colon [\beta,\infty)\to [\beta,\infty)$ instead of $F$.

To rewrite the results of Schleicher and Zimmer with this comparison function,
we consider the itinerary $\underline{s}=\underline{0}=(0,0,0,\cdots)$. By~\eqref{sz0} we have
\begin{equation}\label{sz3}
E(g_{\underline{0}}(t))=g_{\underline{0}}(F(t))\quad \text{for } t>t_{\underline{0}}=0.
\end{equation}
Here $g_{\underline{0}}\colon (0,\infty)\to\mathbb{R}$ is increasing, so  that
$\lim_{t\to 0}g_{\underline{0}}(t)$ exists and is  equal to the repelling fixed point $\beta$ of $E$.
Thus $g_{\underline{0}}$ extends to a continuous and bijective function $g_{\underline{0}}\colon [0,\infty)\to[\beta,\infty)$, and~\eqref{sz3} also holds for
$t=0$ by continuity. With $u=g_{\underline{0}}(t)$ we can write~\eqref{sz3} as
\begin{equation}\label{sz4}
g_{\underline{0}}^{-1}(E(u))=F(g_{\underline{0}}^{-1}(u)),\quad u\geq \beta.
\end{equation}
Set $u_{\underline{s}}=g_{\underline{0}}(t_{\underline{s}})$.
It follows from~\eqref{sz3} and~\eqref{sz4} that
$$u_{\underline{s}}=\inf\left\{u\in[\beta,\infty)\colon
 \limsup_{k\to\infty}\frac{|s_k|}{E^{k}(u)}<\infty\right\}.$$
The map
$h_{\underline{s}}
=g_{\underline{s}}\circ g_{\underline{0}}^{-1}\colon (u_{\underline{s}},\infty)\to \mathbb{C}$
is just a reparametrization of the hairs $g_{\underline{s}}$.
Furthermore, it follows from~\eqref{sz0} and~\eqref{sz3} that for
$u=g_{\underline{0}}(t) >g_{\underline{0}}(t_{\underline{s}})=u_{\underline{s}}$ we have
\[
E(h_{\underline{s}}(u))=E(g_{\underline{s}}(t))=g_{\sigma(\underline{s})}(F(t))
=g_{\sigma(\underline{s})}(F(g_{\underline{0}}^{-1}(u)))=h_{\sigma(\underline{s})}(E(u)).
\]
The function $h_{\underline{s}}$ is limit of the functions
\[
h_{\underline{s},\,n}(u)=(L_{s_0}\circ L_{s_1}\circ \cdots\circ L_{s_n}\circ E^{n+1})(u),
\]
that is, we have
\[
\lim_{n\to\infty} h_{\underline{s},n}(u)= h_{\underline{s}}(u) \quad\text{for } u>u_{\underline{s}}.
\]
It is proved in the papers cited above that the function $h_{\underline{s}}$ has a
continuous extension $h_{\underline{s}}\colon [u_{\underline{s}},\infty)\to\C$.
The point $h_{\underline{s}}(u_{\underline{s}})$ is then the endpoint of the hair.

The functions $h_{\underline{s},\,n}$ obviously satisfy
\begin{equation}\label{sz8}
E(h_{\underline{s},n}(u))= h_{\sigma(\underline{s}),n-1}(E(u))
\end{equation}
and taking the limit as $n\to\infty$ yields the equation
\begin{equation}\label{sz9}
E(h_{\underline{s}}(u))= h_{\sigma(\underline{s})}(E(u)) \quad\text{for } u>u_{\underline{s}}
\end{equation}
mentioned above.

\begin{lemma}\label{hsn}
For $u\geq\beta$ we have
\[
u\leq \re h_{\underline{s},\,n}(u) \leq u +\pi \sum_{k=1}^n \frac{2|s_k|+1}{\beta^{k-1}E^{k}(u)}.
\]
\end{lemma}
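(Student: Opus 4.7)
The plan is to write $h_{\underline{s},n}(u)$ as a backward iteration and track real and imaginary parts simultaneously. Set $y_{n+1}=E^{n+1}(u)$ and $y_j=L_{s_j}(y_{j+1})$ for $j=n,n-1,\dots,0$, where $L_k(z)=\log z-\log\lambda+2\pi i k$ denotes the branch of $E^{-1}$ landing in the strip $P(k)$; then $y_0=h_{\underline{s},n}(u)$.

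First I would establish, by downward induction on $j$, that $\re y_j\geq E^j(u)\geq\beta>0$. Indeed,
\[
\re y_j=\log|y_{j+1}|-\log\lambda\geq \log\re y_{j+1}-\log\lambda=L(\re y_{j+1})\geq L(E^{j+1}(u))=E^j(u),
\]
using the monotonicity of $L$. Setting $j=0$ already yields the left inequality $u\leq\re h_{\underline{s},n}(u)$. As a byproduct, $\re y_{j+1}>0$ gives $|\arg y_{j+1}|<\pi/2$, and hence $|\im y_j|\leq \pi/2+2\pi|s_j|\leq \pi(2|s_j|+1)$ for $1\leq j\leq n$ (the case $j=n$ is immediate since $y_{n+1}$ is real).

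For the upper bound the key estimate is the linearization
\[
\log|y_{j+1}|\leq \log(\re y_{j+1}+|\im y_{j+1}|)\leq \log\re y_{j+1}+\frac{|\im y_{j+1}|}{\re y_{j+1}},
\]
combining $\sqrt{a^2+b^2}\leq a+|b|$ with the concavity of $\log$ (Lemma~\ref{lemma-concave} applied with $r=1$). Together with the imaginary-part bound above and $\re y_{j+1}\geq E^{j+1}(u)$, and using concavity of $L$ once more to control $L(\re y_{j+1})-L(E^{j+1}(u))$, the error $\epsilon_j:=\re y_j-E^j(u)\geq 0$ satisfies the one-step recursion
\[
\epsilon_j\leq \frac{\epsilon_{j+1}+\pi(2|s_{j+1}|+1)}{E^{j+1}(u)}.
\]
Iterating this from $\epsilon_n=0$ telescopes to $\epsilon_0\leq \pi\sum_{k=1}^n (2|s_k|+1)/(E^1(u)\,E^2(u)\cdots E^k(u))$. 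Since $E$ fixes $\beta$ and is increasing on $[\beta,\infty)$, one has $E^j(u)\geq\beta$ for every $j\geq 0$, so $E^1(u)\cdots E^{k-1}(u)\geq \beta^{k-1}$ and the stated right-hand side follows.

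The main obstacle is selecting the correct linearization: the naive estimate $\log(1+x^2)\leq x^2$ applied to $x=\im y_{j+1}/\re y_{j+1}$ produces a quadratic factor $(2|s_k|+1)^2$ and misses the bound. Passing through $\sqrt{a^2+b^2}\leq a+|b|$ before taking logarithms is what produces the linear dependence on $|s_k|$. Beyond that, the argument is essentially bookkeeping: keeping the $y_j$ in the right half-plane so that the principal logarithm and the branch selection by $s_j$ are unambiguous throughout the iteration.
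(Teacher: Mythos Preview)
Your argument is correct. Both proofs linearize the logarithm via $\log(1+t)\le t$ and telescope along the backward orbit $y_j=L_{s_j}(y_{j+1})$, but the bookkeeping differs. The paper tracks the moduli, setting $p_k(u)=L^k(|y_k|)$, and first proves the auxiliary estimate $L^k(x+y)\le L^k(x)+y/(\beta^{k-1}x)$; this produces the $\beta^{k-1}$ denominator directly at each step. You instead track $\re y_j$ and derive the one-step recursion $\epsilon_j\le(\epsilon_{j+1}+\pi(2|s_{j+1}|+1))/E^{j+1}(u)$, whose iteration gives the sharper intermediate bound $\epsilon_0\le\pi\sum_{k=1}^n(2|s_k|+1)/\prod_{j=1}^k E^j(u)$ before weakening $\prod_{j=1}^{k-1}E^j(u)\ge\beta^{k-1}$ at the very end. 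Your route avoids the separate induction on $L^k(x+y)$ and yields a slightly stronger inequality along the way; the paper's route packages the $\beta$-factors earlier and works uniformly with absolute values, which is mildly cleaner if one does not care about the sharper intermediate form. Two cosmetic remarks: invoking Lemma~\ref{lemma-concave} for the concavity of $\log$ is overkill (that case is just $\log(1+t)\le t$), and your observation $|\arg y_{j+1}|<\pi/2$ already gives $|\im y_j|\le\pi/2+2\pi|s_j|$, which is a hair tighter than the $\pi(2|s_j|+1)$ you then use.
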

\begin{proof}
First we note that if $s\in\Z$ and $z\in H$, then
\begin{equation}\label{hs1}
L(|z|)\leq |L_s(z)|\leq L(|z|)+(2|s|+1)\pi.
\end{equation}
The left inequality of~\eqref{hs1} implies that
\[
\re h_{\underline{s},\,n}(u)
=
L\!\left(|(L_{s_{1}}\circ \dots\circ L_{s_n})(E^{n+1}(u))|\right)\geq L^{n+1}(E^{n+1}(u))=u.
\]
To prove the upper bound for $\re h_{\underline{s},\,n}(u)$ we note that if $x\geq \beta$ and $y>0$, then
\[
L(x+y)
=L\!\left(x\!\left(1+\frac{y}{x}\right)\right)=L(x)+\log\left(1+\frac{y}{x}\right)\leq L(x)+\frac{y}{x}.
\]
We conclude that
\[
L^2(x+y)
\leq L\!\left(  L(x)+\frac{y}{x}\right) \leq L^2(x)+\frac{y}{x L(x)}
\leq L^2(x)+\frac{y}{\beta x}
\]
and induction shows that
\begin{equation}\label{hs2}
L^k(x+y)\leq L^k(x)+\frac{y}{\beta^{k-1}x}
\end{equation}
for $k\in\N$.  We now fix $n\in\N$ and, for $1\leq k\leq n$, put
\[
p_k(u)=L^k \!\left(|(L_{s_{k}}\circ \dots\circ L_{s_n})(E^{n+1}(u))|\right).
\]
Then
\[
p_1(u)= \re h_{\underline{s},\,n}(u)
\]
and for $1\leq k\leq n-1$ we find, using~\eqref{hs1} and~\eqref{hs2}, that
\begin{align*}
& \quad \  p_k(u)
\\
& =
L^k \!\left(|(L_{s_{k}}\circ \dots\circ L_{s_n})(E^{n+1}(u))|\right)\\
&\leq
L^k \!\left(L(|(L_{s_{k+1}}\circ \dots\circ L_{s_n})(E^{n+1}(u))|)+(2|s_k|+1)\pi\right)
\\ &
\leq
L^k \!\left( L(|(L_{s_{k+1}}\circ \dots\circ L_{s_n})(E^{n+1}(u))|)\right)
+\frac{(2|s_k|+1)\pi}{\beta^{k-1}L(|(L_{s_{k+1}}\circ \dots\circ L_{s_n})(E^{n+1}(u))|)}
\\ &
\leq
L^{k+1} \!\left( |(L_{s_{k+1}}\circ \dots\circ L_{s_n})(E^{n+1}(u))|\right)
+\frac{(2|s_k|+1)\pi}{\beta^{k-1}|L^{n-k+1}(E^{n+1}(u))|}
\\ &
=
L^{k+1} \!\left( |(L_{s_{k+1}}\circ \dots\circ L_{s_n})(E^{n+1}(u))|\right)
+\frac{(2|s_k|+1)\pi}{\beta^{k-1}E^{k}(u)}
\\ &
=
p_{k+1}(u)
+\frac{(2|s_k|+1)\pi}{\beta^{k-1}E^{k}(u)}.
\end{align*}
This also holds for $k=n$ with $p_{n+1}(u)=L^{n+1}(E^{n+1}(u))=u$. We obtain
\[
\re h_{\underline{s},\,n}(u)
= p_1(u)
= u
+\sum_{k=1}^n  \left(p_k(u)-p_{k+1}(u)\right)
\leq
u
+\pi \sum_{k=1}^n \frac{2|s_k|+1}{\beta^{k-1}E^{k}(u)}
\]
as claimed.
\end{proof}
\begin{proof}[Proof of Theorem~\ref{thm5}]
Let $z\in\JJ(E)\backslash \CC$ and let $\underline{s}$ be the itinerary of~$z$.
Then $z=h_{\underline{s}}(u)$ for some $u>u_{\underline{s}}$.
It follows from~\eqref{sz9} and Lemma~\ref{hsn} that
\[
\re E^k(z)=\re E^k(h_{\underline{s}}(u))=
\re h_{\sigma^k(\underline{s})}(E^k(u))\geq E^k(u)
\]
for all $k\in\N$. Let $u_{\underline{s}}<v<u$. For large $k$ we then have
\[
|\im E^k(z)|\leq (2|s_k|+1)\pi < E^k(v).
\]
Since $v<u$ we have $v=L^\varepsilon(u)$ for some $\varepsilon>0$.
Altogether we see that
\[
|\im E^k(z)|<  E^k(L^\varepsilon(u))=E^{k-\varepsilon}(u)=L^\varepsilon(E^k(u))
\leq L^\varepsilon(\re E^k(z))
\]
for large~$k$. Since also $\re E^k(u)>x_0$ for large~$k$, the conclusion follows.
\end{proof}
\begin{proof}[Proof of Theorem~\ref{thm4}]
Since $\FF(E)$ consists of the attracting basin of $\alpha$ we have $\XX(x_0,L^\varepsilon)\subset \JJ(E)$.
Let $z\in\CC$. Then $z=h_{\underline{s}}(u_{\underline{s}})$ for some
allowable sequence~$\underline{s}$. Let $u> u_{\underline{s}}$.
Then there exists $k_0\in\N$ such that $E^k(u)\geq (2|s_k|+1)\pi$ for $k\geq k_0$.
For $u>u_{\underline{s}}$ and  $n\geq k\geq k_0$ we deduce from~\eqref{sz8} and Lemma~\ref{hsn} that
\begin{align*}
\re E^k(h_{\underline{s},n}(u))
& = \re h_{\sigma^k(\underline{s}),n-k}(E^k(u))
\leq E^k(u)
+\pi \sum_{j=1}^{n-k} \frac{2|s_{k+j}|+1}{\beta^{j-1}E^{j}(E^k(u))}
\\ &
= E^k(u)
+\pi \sum_{j=1}^{n-k} \frac{2|s_{k+j}|+1}{\beta^{j-1}E^{j+k}(u)}
\leq E^k(u)
+\sum_{j=1}^{n-k} \frac{1}{\beta^{j-1}}
\\ &
\leq E^k(u)
+\frac{\beta}{\beta-1} .
\end{align*}
It follows that
\begin{equation}\label{hs3}
\re E^k(h_{\underline{s}}(u)) \leq E^k(u) +\frac{\beta}{\beta-1}
\end{equation}
for $u>u_{\underline{s}}$ and  $k\geq k_0$.

If $u_{\underline{s}}=\beta$, choose $\gamma\in(\beta,x_0)$.
Then~\eqref{hs3}  holds in particular for all $u\in (u_{\underline{s}},\gamma)$ and
thus, by continuity, also  for $u=u_{\underline{s}}$.  Hence
\[
\re E^k(z)= \re E^k(h_{\underline{s}}(u_{\underline{s}}))
 \leq E^k(\gamma) +\frac{\beta}{\beta-1}
\leq E^k(x_0)
\]
for large $k$, which implies that $z\notin\XX(x_0,L^\varepsilon)$.

Suppose now that $u_{\underline{s}}>\beta$. Choose $u_1,u_2$ with $\beta<u_1<
u_{\underline{s}}<u_2$ such that $u_2<E^\varepsilon (u_1)$.
We obtain
\[
\re E^k(z) \leq E^k(u_2) +\frac{\beta}{\beta-1} \leq 2 E^k(u_2)
\]
for large $k$, while
\[
\im E^k(z) \geq (2|s_k|-1) |\pi|\geq 2 E^k(u_1)
\]
for arbitrarily large~$k$.  Using Lemma~\ref{lemma-L(cx)} we  obtain
\begin{align*}
\psi(\re E^k(z))
&=L^\varepsilon (\re E^k(z))
%\\ &
\leq L^\varepsilon (2 E^k(u_2))
\\ &
\leq 2 L^\varepsilon (E^k(u_2))
%\\ &
= 2 E^k(L^\varepsilon(u_2))
\\ &
= 2 E^k(u_1)
%\\ &
\leq \im E^k(z)
\end{align*}
and hence $E^k(z)\notin \Omega$ for arbitrarily large~$k$.  Thus $z\notin\XX(x_0,L^\varepsilon)$ also in this case.
\end{proof}

\section{Proof of Theorem~\ref{thm3}}\label{proofthm3}
The proof of $(i)$ will follow the arguments of Karpi\'{n}ska and Urba\'{n}ski~\cite{Karpinska2006}
while the proof of $(ii)$ will also use some ideas from~\cite{Bergweiler2010}.
\begin{proof}[Proof of Theorem~\ref{thm3}]
Following~\cite{Karpinska2006} we consider
the family $\BB$ of all squares of the form
\[
\left\{z\in \C\colon \beta+ (l-1)\pi\leq \re z< \beta +l\pi
\text{ and }
-\tfrac{1}{2}\pi+2k\pi\leq \im z\leq\tfrac{1}{2}\pi+2k\pi\right\}
\]
with $k\in\Z$ and $l\in\N$.  Since $\JJ(E)\subset \{z\in\C\colon \re z\geq\beta\}$, we see that
for every $z\in \JJ(E)$ and $n\geq 0$ there exists a unique square $B_n(z)\in \mathcal{B}$ such
that $E^n(z)\in B_n(z)$.
We denote by $K_n(z)$ the component of $E^{-n}(B_n(z))$ that contains~$z$.
Then $E^n(K_{n-1}(z))$ is a half-annulus centered at the origin.
We denote its inner and outer radius by $r_n(z)$ and $R_n(z)$. Clearly $R_n(z)/r_n(z)=e^{\pi}$.

For $n\geq 0$ we define a collection $\KK_n$ of sets $K_n(z)$ by recursion.
First we choose $z_0\in\C$ with $\re z_0>x_0$ such that the square
$B_0(z_0)$ is contained in $\Omega_\psi$ and we put $\KK_0=\{B_0(z_0)\}$.
Assuming that $\KK_{n-1}$ has been defined, let $K=K_{n-1}(z)\in \KK_{n-1}$.
Then $E^{n-1}(K)\in\BB$ and thus $E^n(K)$ is a half-annulus.
The sets $K_{n}(\zeta)$ which are contained in $K$ and which have the
property that
\[
E^n(K_n(\zeta))\subset \left\{ z\in\C\colon 2r_n(z)<\re z <\tfrac23 R_n(z)
\text{ and } |\im z|< \psi(r_n(z))\right\}
\]
are called the children of $K$.  The set of children of $K$ is denoted by $ch(K)$.  We then put
\[
\KK_n=\bigcup_{K\in \KK_{n-1}} ch(K) .
\]

Let $X_n$ be the closure of the union of the elements in $\KK_n$.
We define a sequence $(\mu_n)$ of measures with $\supp\mu_n=X_n$ by recursion.
Let  $\mu_0$ be the normalized Lebesgue measure on $X_0$; that is,
$\mu_0(A)=\area(A\cap X_0)/\pi^2$
for every measurable subset~$A$ of $\C$.
Suppose now that the measure $\mu_n$ on $X_n$ has been defined. The measure
$\mu_{n+1}$ on $X_{n+1}$ is then defined on each $K_{n+1}\in \KK_{n+1}$ by
\[
\mu_{n+1}|_{K_{n+1}}=\frac{\area K_{n+1} }{\sum_{K\in ch(K_n)}\area K }\mu_n|_{K_n},
\]
where $K_n$ is the unique element of $\KK_n$ containing $K_{n+1}$.
Then (see~\cite{Karpinska2006} for more details)
there exists a unique Borel measure $\mu$ on $X_\infty=\bigcap_{n=0}^\infty X_n$ which
satisfies $\mu(K_n)=\mu_n(K_n)$ for every $K_n\in \KK_n$.

Karpi\'{n}ska and Urba\'{n}ski showed that there exists constants $\eta>0$ and $L,M>1$ such that if
$\re z$ is large enough, then~\cite[Lemma~2.3]{Karpinska2006}
\begin{equation} \label{ku0}
r_{n+1}(z)\geq \exp(\eta r_n(z))
\end{equation}
as well as~\cite[equation~(8)]{Karpinska2006}
\begin{equation} \label{ku1}
M^{-n}\prod_{j=1}^n r_j(z)\leq |(E^n)'(z)|\leq M^n\prod_{j=1}^n r_j(z)
\end{equation}
and~\cite[equation~(16)]{Karpinska2006}
\begin{equation}
\label{ku2}
\begin{aligned}
\mu(K_n(z))
&= c_1(z)^{n}\left( \prod_{j=1}^n r_j(z) \right)^{-2}\prod_{i=1}^n\frac{r_i(z)^2}{r_i(z)\psi(r_i(z))}
\\ &
=c_1(z)^n \prod_{i=1}^n\frac{1}{r_i(z)\psi(r_i(z))}
\end{aligned}
\end{equation}
for some $c_1(z)\in [L^{-1},L]$.

It follows easily from~\eqref{ku0} that if $\re z$ is large enough,
then $\re E^k(z)>E^k(x_0)$ for all $k\in\N$.
Hence $X_\infty\subset \XX$ if $z_0$ was chosen with $\re z_0$ is sufficiently large.
It thus suffices to show that $\HH^h(X_\infty)=\infty$.

In order to apply Lemma~\ref{th21}, we have to estimate $\mu(D(z,t))$ for $z\in X_\infty$.
We fix $z\in X_\infty$ and write $r_j$ and $R_j$ instead of $r_j(z)$ and $R_j(z)$.
It follows from~\eqref{ku2} that
\begin{equation} \label{ku6}
\mu(K)\leq L^n \prod_{i=1}^n\frac{1}{r_i\psi(r_i)} \quad\text{for } K\in ch(K_{n-1}(z)).
\end{equation}
It is not difficult to deduce from~\eqref{ku0} that if $C>1$ and $\delta>0$, then
\begin{equation} \label{ku6a}
C^n \prod_{i=1}^{n-1} r_i  \leq (\log r_n)^{1+\delta}
\end{equation}
for large~$n$.

Since $E^n(z)$ is near the ``center'' of the half-annulus $E^n(K_{n-1}(z))$
and thus $E^{n-1}(z)$ is near the center of the square $E^{n-1}(K_{n-1}(z))=B_{n-1}(z)$,
the Koebe distortion theorem yields that there exist $c>0$ such that
\begin{equation} \label{ku6b}
D(z,c/|(E^{n-1})'(z)|) \subset K_{n-1}(z) .
\end{equation}

For small $t>0$ we choose $n\in\N$ such that $c/|(E^n)'(z)|<t\leq c/|(E^{n-1})'(z)|$.
Hence $D(z,t)\subset K_{n-1}(z)$.
Denoting by $N(z,t)$ the number of children of $K_{n-1}(z)$ which intersect $D(z,t)$
we deduce from~\eqref{ku6} that
\begin{equation} \label{ku5}
\mu(D(z,t))\leq N(z,t) L^n \prod_{i=1}^n\frac{1}{r_i\psi(r_i)} .
\end{equation}

Using again Koebe's theorem, we see that $E^n(D(z,t))$  is contained in disk around $E^n(z)$ of radius
$C|(E^n)'(z)|t$ for some constant~$C$.  This implies that $N(z,r)$ is bounded by the number of elements
of $\BB$ contained in the intersection of the disk $D(E^n(z),C|(E^n)'(z)|t+2\pi)$
with the strip $T_n=\{z\in\C\colon |\im z|\leq \psi(r_n)\}$.  We may assume that $C\geq 2\pi/c$.
Noting that $|(E^n)'(z)|t\geq c$ by the choice of $n$ we see that
$C|(E^n)'(z)|t+2\pi\leq 2C|(E^n)'(z)|t$.  Thus
\begin{equation} \label{ku7}
N(z,r)\leq \area\!\left( D(E^n(z),2C|(E^n)'(z)|t) \cap T_n \right).
\end{equation}
We distinguish two cases:

\emph{Case} 1: $2C|(E^n)'(z)|t\leq \psi(r_n)$.  Then we may simplify~\eqref{ku7} to
\[
N(z,t)\leq \area D(E^n(z),2C|(E^n)'(z)|t) =4\pi C^2 |(E^n)'(z)|^2 t^2
\]
and thus~\eqref{ku1} and~\eqref{ku5} yield
\begin{equation} \label{ku9}
\mu(D(z,t))\leq 4\pi C^2 (M^2L)^n t^2 \prod_{i=1}^n\frac{r_i}{\psi(r_i)} .
\end{equation}
Since $t\mapsto t\,p(1/t)$ is increasing and
\[
t\leq \frac{\psi(r_n)}{2C|(E^n)'(z)|}\leq \frac{M^n}{2C}\frac{\psi(r_n)}{\prod_{j=1}^n r_j}
\]
by~\eqref{ku1} and the choice of~$n$, we deduce from~\eqref{ku9} and~\eqref{ku6a} that
\begin{align*}
\frac{\mu(D(z,t))}{h(t)}
&=\frac{\mu(D(z,t))}{t}p\!\left(\frac{1}{t}\right)
%\\ &
\leq 4\pi C^2 (M^2L)^n \,t\, p\!\left(\frac{1}{t}\right)  \prod_{i=1}^n\frac{r_i}{\psi(r_i)}
\\ &
\leq 4\pi C^2 (M^2L)^n \frac{M^n}{2C}\frac{\psi(r_n)}{\prod_{j=1}^n r_j}
\, p\!\left(\frac{2C}{M^n}\frac{\prod_{j=1}^n r_j}{\psi(r_n)}\right)  \prod_{i=1}^n\frac{r_i}{\psi(r_i)}
\\ &
= 2\pi C (M^3L)^n
\, p\!\left( \frac{2C}{M^n}\frac{\prod_{j=1}^n r_j}{\psi(r_n)} \right) \prod_{i=1}^{n-1}\frac{1}{\psi(r_i)}
\\ &
\leq p\!\left( \frac{r_n (\log r_n)^{1+\delta}}{\psi(r_n)} \right) \frac{1}{\psi(r_{n-2})\psi(r_{n-1})}
\end{align*}
for large~$n$.  Since $r_n\leq E(R_{n-1})\leq \exp(R_{n-1})$ we have 
$\log r_n\leq R_{n-1}\leq e^\pi r_{n-1}$
and thus $\psi(\log r_n)\leq \psi(e^\pi r_{n-1})=O(\psi(r_{n-1}))$.  It now follows from~\eqref{condp}
that $\mu(D(z,t))/h(t)=O( 1/\psi(r_{n-2}))$. We conclude that $\mu(D(z,t))/h(t)\to 0$ in this case.

\emph{Case} 2: $2C|(E^n)'(z)|t> \psi(r_n)$.
Then
\begin{align*}
N(z,t) &\leq \area \left\{\zeta \in\C \colon |\re (\zeta-E^n(z)|\leq 2C|(E^n)'(z)|t
\text{ and } |\im z|\leq \psi(r_n)\right\}
\\ &
= 8C|(E^n)'(z)|\psi(r_n) t
\end{align*}
which together with~\eqref{ku1} and~\eqref{ku5} yields
\begin{equation} \label{ku11}
\mu(D(z,t)) \leq  8C(ML)^n \, t  \prod_{i=1}^{n-1}\frac{1}{\psi(r_i)}.
\end{equation}
Now
\[
t\geq \frac{\psi(r_n)}{2C|(E^n)'(z)|}\geq \frac{1}{2CM^n}\frac{\psi(r_n)}{\prod_{j=1}^n r_j}
\]
and hence~ \eqref{ku11} yields
\begin{align*}
\frac{\mu(D(z,t))}{h(t)}
&=\frac{\mu(D(z,t))}{t}\, p\!\left(\frac{1}{t}\right)
\\ &
\leq 8C(ML)^n  \, p\!\left(2C M^{n}\frac{\prod_{j=1}^{n} r_j}{\psi(r_n)} \right)
 \prod_{i=1}^{n-1}\frac{1}{\psi(r_i)}
\\ &
\leq p\!\left( \frac{r_n (\log r_n)^{1+\delta}}{\psi(r_n)} \right) \frac{1}{\psi(r_{n-2})\psi(r_{n-1})}
\end{align*}
from which we obtain $\mu(D(z,t))/h(t)\to 0$ as in Case~1.

Thus we have $\mu(D(z,t))/h(t)\to 0$ as $t\to 0$ in both cases and $(i)$ follows
from Lemma~\ref{th21}.

In order to prove $(ii)$,
we denote for $z\in\XX$ by $B_n^*(z)$ the square of sidelength $\pi$ with
center~$z$, with sides parallel to the coordinate axes.
We denote by $K_n^*(z)$ the component of $E^{-n}(B_n^*(z))$ that contains~$z$.
Then $E^n(K_{n-1}^*(z))$ is again a half-annulus centered at the origin.
We denote its inner and outer radius by $r_n^*(z)$ and $R_n^*(z)$.
As before we have $R_n^*(z)/r_n^*(z)=e^{\pi}$. With the quantities $r_n(z)$ and $R_n(z)$
defined in the first part of the proof we have $e^{-\pi}\leq r_n^*(z)/r_n(z)\leq e^{\pi}$ and
$e^{-\pi}\leq R_n^*(z)/R_n(z)\leq e^{\pi}$.
In particular, \eqref{ku1} holds for some $M>1$ and~\eqref{ku6a} holds for any
given $C>1$ with $r_n(z)$ and $R_n(z)$ replaced by $r_n^*(z)$ and $R_n^*(z)$,
provided $n$ is sufficiently large.

We now fix $z\in\XX$ and, as before, drop $z$ from the notation and write
$r_n^*$ and $R_n^*$ instead of  $r_n^*(z)$ and $R_n^*(z)$.

We can cover $E^n(K_{n-1}^*(z))\cap \Omega_\psi$ by squares of sidelength
$2\psi(R_n^*)$ whose centers are on the real axis.
The number $N_n(z)$ of squares required is less than $R_n^*/\psi(R_n^*)$.
Koebe's theorem yields that the diameters of the preimages
of these squares that are contained in $K_{n-1}^*(z)$
have diameter less than $C \psi(R_n^*)/|(E^n)'(z)|$, for some constant $C>1$.
Moreover, \eqref{ku6b} holds for some $c>0$ with $K_{n-1}(z)$ replaced by $K_{n-1}^*(z)$.
With $\rho_n(z)=c/|(E^{n-1})'(z)|$ and $d_n(z)=C \psi(R_n^*)/|(E^n)'(z)|$
we thus see that $D(z,\rho_n(z))\cap \XX$ can be covered by $N_n(z)$ sets of
diameter $d_n(z)$. Moreover, $d_n(z)\to 0$ and $\rho_n(z)\to 0$ as $n\to\infty$.

Hence $(ii)$ follows from Lemma~\ref{th22} if we show that
for given $\varepsilon>0$ we have
$N_n(z)h(d_n(z))\leq \varepsilon\cdot\rho_n(z)^2$ for large~$n$. Noting that
\[
\rho_n(z)\geq c M^{-n}\prod_{j=1}^{n-1} \frac{1}{r_j^*} \geq \frac{1}{(\log R_n^*)^{1+\delta/3}}
\]
by~\eqref{ku6a}, with $\delta$ replaced by $\delta/3$, and
\[
d_n(z)\leq C M^{n} \psi(R_n^*) \prod_{j=1}^{n} \frac{1}{r_j^*}
\leq \frac{\psi(R_n^*)}{R_n^* \log R_n^*}
\]
by~\eqref{ku1}
it thus suffices to show that
\[
\frac{R_n^*}{\psi(R_n^*)}
h\!\left(\frac{\psi(R_n^*)}{R_n^* \log R_n^*}\right)
=
\frac{1}{\displaystyle p\!\left(\frac{R_n^*  \log R_n^*}{ \psi (R_n^*)}\right) \log R_n^*}
\leq \varepsilon\frac{1}{(\log R_n^*)^{2+2\delta/3}},
\]
which is equivalent to
\[
p\!\left(\frac{R_n^*  \log R_n^*}{ \psi (R_n^*)}\right)
\geq \frac{1}{\varepsilon} (\log R_n^*)^{1+2\delta/3}.
\]
But the last inequality is satisfied by the hypothesis~\eqref{condp2} for large~$n$.
\end{proof}

\section{Proof of Theorems~\ref{thm2} and~\ref{thm1}} \label{proofthm21}
\begin{proof}[Proof of Theorem~\ref{thm2}]
Let $p(t)=(\log t)^s$ so that $h(t)=t/p(1/t)$.
Let $x_0>\beta$,  $\varepsilon>0$, $0<\delta<s-1$ and $\psi=L^\varepsilon$.
For large $t$ we then have, using~\eqref{Lsr},
\begin{align*}
p\!\left(\frac{t\log t}{\psi(t)}\right)
&= \left( \log t+\log\log t-\log L^\varepsilon(t)\right)^s \\
&= \left( L(t)+\log\log t-L^{1+\varepsilon}(t)\right)^s
% \\ &
\geq  \left( \tfrac12 \log t\right)^s
\geq (\log t)^{1+\delta}
\end{align*}
so that~\eqref{condp2} holds. Thus Theorem~\ref{thm3}, part (ii), implies that
$\HH^h(\XX(x_0,L^\varepsilon))=0$. As $\varepsilon>0$ was arbitrary, we see that for
\[
Y=\bigcup_{\varepsilon>0} \XX(x_0,L^\varepsilon)
=\bigcup_{n=1}^\infty \XX(x_0,L^{1/n})
\]
we also have $\HH^h(Y)=0$.
Since $E$ is locally bi-Lipschitz and $h(2t)=O(h(t))$ as $t\to 0$ we see that
$\HH^h(E^{-1}(Y))=0$ and in fact that
$\HH^h(E^{-k}(Y))=0$ for all $k\in\N$.
Since
\[
\JJ\backslash\CC \subset \bigcup_{k=1}^\infty E^{-k}(Y)
\]
by Theorem~\ref{thm5}, the conclusion follows.
\end{proof}

\begin{proof}[Proof of Theorem~\ref{thm1}]
Let $0<\varepsilon<s-1$.
In view of Theorems~ \ref{thm3} and~\ref{thm4} it suffices to show
that $\psi=L^\varepsilon$ and $p=L^s$ satify the hypotheses of Theorem~\ref{thm3}.

Clearly, $\psi$ and $p$ are increasing and~\eqref{Lr} says that $\psi(x)=o(x)$ 
while Lemma~\ref{lemma-L(cx)} yields that $\psi(2x)=O(\psi(x))$
as $x\to\infty$.
In order to check the conditions on $p$ we put $q\colon(0,t_0)\to(0,\infty)$, $q(t)=t\,p(1/t)$.
Then $q''(t)=p''(1/t)/t^3$. Since $p=L^r$ is concave by Lemma~\ref{lemma-concave},
we see that $q$ is concave.
Since also $q(t)\to 0$ as $t\to 0$ by~\eqref{Lr} this implies that $q$ is increasing on a suitable
interval~$(0,t_0)$.

It remains to verify condition~\eqref{condp}.  Recalling that $L(x)=\log x-\log\lambda$
and hence $p(x)=L^{s-1}(\log x-\log\lambda)$ we deduce from Lemma~\ref{lemma-L(cx)} that
\begin{align*}
p\!\left(\frac{t(\log t)^{1+\delta}}{\psi(t)}\right)
&=L^{s-1}(\log  t+(1+\delta)\log\log t-\log \psi(t)-\log\lambda)
\\ &
\leq L^{s-1}(2t)
\leq  2L^{s-1}(t)
\end{align*}
for large~$t$.
Since $\varepsilon<s-1$ we have $L^{s-1}(t)=o(L^\varepsilon(t))$ as $t\to\infty$ by~\eqref{Lsr}.
Hence $2L^{s-1}(t)\leq L^\varepsilon(t)=\psi(t)$ for large $t$ so that~\eqref{condp} holds.
\end{proof}
\begin{remark}
Let $\psi(t)=t/(\log t)^\varepsilon$. For $p(t)=t^{1/(1+2\delta+\varepsilon)}$ we have
\[
p\!\left(\frac{t(\log t)^{1+\delta}}{\psi(t)}\right)=p((\log t)^{1+\delta+\varepsilon})
=(\log t)^{(1+\delta+\varepsilon)/(1+2\delta+\varepsilon)}
\leq \frac{\log t}{(\log\log t)^\varepsilon}=\psi(\log t)
\]
for large $t$ so that~\eqref{condp} is satisfied.
The other hypotheses of Theorem~\ref{thm3} are also easily checked.
Hence part $(i)$ of Theorem~\ref{thm3} implies that
$\HH^h(\XX)=\infty$ for $h(t)=t/p(1/t)=t^{1+1/(1+2\delta+\varepsilon)}$.
With $\delta\to 0$ we see that the Hausdorff dimension of $\XX$ is at
least $1+1/(1+\varepsilon)=(2+\varepsilon)/(1+\varepsilon)$.

Similarly, with $p(t)=t^{(1+\delta)/(1+\varepsilon)}$ we have
\[
p\!\left(\frac{t \log t}{\psi(t)}\right)=p((\log t)^{1+\varepsilon})
=(\log t)^{1+\delta}
\]
for large $t$ so that~\eqref{condp2} is satisfied.
Now part $(ii)$  of Theorem~\ref{thm3}, together with the limit as
$\delta\to 0$, implies that the Hausdorff dimension of $\XX$ is at
most $(2+\varepsilon)/(1+\varepsilon)$.

Altogether we see that $\XX$ has Hausdorff dimension $(2+\varepsilon)/(1+\varepsilon)$,
thus recovering the result of~\cite{Karpinska2006} mentioned in the introduction.
\end{remark}

\noindent{\bf Acknowledgements.}\,\,The second author was supported by the scholarship from China Scholarship Council (No.201206105015), and would express her thanks for the hospitality of Mathematisches Seminar at Christian-Albrechts-Universit\"{a}t zu Kiel.

\end{document}